\title{A Gray-categorical pasting theorem}
\author{Nicola Di Vittorio}
\address{Centre of Australian Category Theory, Macquarie University, NSW 2109, Australia}
\email{nicola.divittorio@mq.edu.au}
\date{}
\begin{document}
\begin{abstract}
The notion of $\Gray$-category, a semi-strict $3$-category in which the middle four interchange is weakened to an isomorphism, is central in the study of three-dimensional category theory. In this context it is common practice to use $2$-dimensional pasting diagrams to express composites of $2$-cells, however there is no thorough treatment in the literature justifying this procedure. We fill this gap by providing a formal approach to pasting in $\Gray$-categories and by proving that such composites are uniquely defined up to a contractible groupoid of choices.
\end{abstract}

\maketitle

\section{Introduction}
Pasting diagrams are a graphical tool to express compositions in higher dimensional categories. They can be interpreted as vertical compositions of whiskerings, e.g.\ the triangle identities $g\epsilon \ \circ \ \eta g=\id_g$ and $\epsilon f\circ f\eta=\id_f$ for an adjunction $f\dashv g$ with unit $\eta\colon\id_A\Rightarrow gf$ and counit $\epsilon\colon fg\Rightarrow\id_B$ in a $2$-category can be visualized as follows
\[\begin{tikzcd}
	& A & A & A & A & A && A \\
	B & B && A && B & B & A
	\arrow[""{name=0, anchor=center, inner sep=0}, Rightarrow, no head, from=1-2, to=1-3]
	\arrow[""{name=1, anchor=center, inner sep=0}, "g", from=2-1, to=1-2]
	\arrow[""{name=2, anchor=center, inner sep=0}, Rightarrow, no head, from=2-1, to=2-2]
	\arrow[""{name=3, anchor=center, inner sep=0}, "g"', from=2-2, to=1-3]
	\arrow["f"', from=1-2, to=2-2]
	\arrow[""{name=4, anchor=center, inner sep=0}, "g", bend left=45, pos=0.40, from=2-4, to=1-4]
	\arrow[""{name=5, anchor=center, inner sep=0}, Rightarrow, no head, from=1-5, to=1-6]
	\arrow[""{name=6, anchor=center, inner sep=0}, "f"', from=1-5, to=2-6]
	\arrow[""{name=7, anchor=center, inner sep=0}, Rightarrow, no head, from=2-6, to=2-7]
	\arrow[""{name=8, anchor=center, inner sep=0}, "f", from=1-6, to=2-7]
	\arrow[""{name=9, anchor=center, inner sep=0}, "f", bend left=45, pos=0.60, from=1-8, to=2-8]
	\arrow["g", from=2-6, to=1-6]
	\arrow[""{name=10, anchor=center, inner sep=0}, "g"', bend right=45, pos=0.40, from=2-4, to=1-4]
	\arrow[""{name=11, anchor=center, inner sep=0}, "f"', bend right=45, pos=0.60, from=1-8, to=2-8]
	\arrow["\epsilon", shorten <=4pt, shorten >=4pt, Rightarrow, xshift=2mm, from=1, to=2]
	\arrow["\eta"', shorten <=4pt, shorten >=4pt, Rightarrow, xshift=-2mm, from=0, to=3]
	\arrow["\eta", shorten <=4pt, shorten >=4pt, Rightarrow, from=5, to=6]
	\arrow[shorten <=10pt, shorten >=10pt, Rightarrow, no head, from=10, to=4]
	\arrow[shorten <=10pt, shorten >=10pt, Rightarrow, no head, from=11, to=9]
	\arrow[shorten <=38pt, shorten >=38pt, Rightarrow, no head, from=3, to=10]
	\arrow["\epsilon"', shorten <=4pt, shorten >=4pt, Rightarrow, xshift=-2mm, from=8, to=7]
	\arrow[xshift=8mm, shorten <=22pt, shorten >=42pt, Rightarrow, no head, from=8, to=11]
\end{tikzcd}\]
While in the situation above the composite is always uniquely determined, there are cases when it is not clear how to intepret a pasting diagram. For instance 
\[\begin{tikzcd}
	& A && B \\
	C && D && E \\
	& F && G
	\arrow["a", from=2-1, to=1-2]
	\arrow[""{name=0, anchor=center, inner sep=0}, "b", from=1-2, to=1-4]
	\arrow["c", from=1-4, to=2-5]
	\arrow["e"', from=1-2, to=2-3]
	\arrow["f"', from=2-3, to=1-4]
	\arrow[""{name=1, anchor=center, inner sep=0}, "g"', from=2-3, to=2-5]
	\arrow[""{name=2, anchor=center, inner sep=0}, "d"', from=2-1, to=2-3]
	\arrow["h"', from=2-1, to=3-2]
	\arrow[""{name=3, anchor=center, inner sep=0}, "i"', from=3-2, to=3-4]
	\arrow["l"', from=2-3, to=3-4]
	\arrow["m"', from=3-4, to=2-5]
	\arrow["\beta"', shorten <=8pt, shorten >=8pt, Rightarrow, from=1-2, to=2]
	\arrow["\alpha", shorten <=8pt, shorten >=8pt, Rightarrow, from=0, to=2-3]
	\arrow["\gamma", shorten <=8pt, shorten >=8pt, Rightarrow, from=1-4, to=1]
	\arrow["\delta"', shift right=8, shorten <=8pt, shorten >=8pt, Rightarrow, from=2-3, to=3]
	\arrow["\varphi", yshift=-2mm, shorten <=8pt, shorten >=8pt, Rightarrow, from=1, to=3-4]
\end{tikzcd}\]
can be written as vertical composition in two different ways, namely
$$ m\delta\cdot\varphi d\cdot\gamma d\cdot cf\beta\cdot c\alpha a$$ and
$$ m\delta\cdot\varphi d\cdot g\beta\cdot\gamma ea\cdot c\alpha a,$$
which coincide for strict higher categories such as $2$-categories by naturality of whiskering, that is in turn an immediate consequence of the middle four interchange law (see Lemma B.1.3 in \cite{RVbook} for a proof), so that the square
\[\begin{tikzcd}
	cfea & cfd \\
	gea & gd
	\arrow["cf\beta", Rightarrow, from=1-1, to=1-2]
	\arrow["{\gamma d}", Rightarrow, from=1-2, to=2-2]
	\arrow["{\gamma ea}"', Rightarrow, from=1-1, to=2-1]
	\arrow["g\beta"', Rightarrow, from=2-1, to=2-2]
\end{tikzcd}\]
commutes. A number of results has been proven in this setting, such as in \cite{johnson1987pasting} and \cite{power19902}. The latter has been extended to bicategories in \cite{verity2011enriched}. A didactic account of these two results can be found in the book \cite{johnson20212}. In short, \cite{power19902} provides a basic algorithm to get a composite for a pasting diagram as a vertical composition of whiskered $2$-cells. In each step of the algorithm we remove a $2$-cell and add it to the whiskered composite, going from top to bottom. So, for instance, in our example we start by taking off $\alpha$ but then we can either remove $\beta$ or $\gamma$. If we choose $\beta$ we will remove $\gamma$ in the next step and viceversa. We just keep removing $2$-cells until none is left. This will theoretically produce different composites, but each time there is a choice between two or more $2$-cells we can use the middle four interchange law so that in the end all the composites will be equal. In the following picture, the columns describe the two compositions of the previous pasting diagram.
\[\begin{tikzcd}
	& A && B &&&& A && B \\
	C && D && E && C && D && E \\
	& A & {} & B &&& {} &  A &&  B \\
	C && D && E && C && D && E \\
	&&& B &&&& A \\
	C && D && E && C && D && E \\
    C && D && E && C && D && E \\
	&&& G &&&&&& G \\
	C && D && E && C && D && E \\
	& F && G &&&&  F && G
	\arrow[""{name=0, anchor=center, inner sep=0}, "b", from=1-2, to=1-4]
	\arrow["e"', from=1-2, to=2-3]
	\arrow["f"', from=2-3, to=1-4]
	\arrow["a", from=2-1, to=1-2]
	\arrow["a", from=4-1, to=3-2]
	\arrow["e", from=3-2, to=4-3]
	\arrow[""{name=1, anchor=center, inner sep=0}, "d"', from=4-1, to=4-3]
	\arrow["f", from=4-3, to=3-4]
	\arrow["c", from=3-4, to=4-5]
	\arrow["l"', from=7-3, to=8-4]
	\arrow[""{name=2, anchor=center, inner sep=0}, "g", from=7-3, to=7-5]
	\arrow["m"', from=8-4, to=7-5]
	\arrow["d", from=7-1, to=7-3]
	\arrow["m"', from=10-4, to=9-5]
	\arrow["i"', from=10-2, to=10-4]
	\arrow["h"', from=9-1, to=10-2]
	\arrow[""{name=3, anchor=center, inner sep=0}, "d", from=9-1, to=9-3]
	\arrow["l", from=9-3, to=10-4]
	\arrow["a", from=4-7, to=3-8]
	\arrow["e", from=3-8, to=4-9]
	\arrow["f", from=4-9, to=3-10]
	\arrow["c", from=3-10, to=4-11]
	\arrow[""{name=4, anchor=center, inner sep=0}, "g"', from=4-9, to=4-11]
	\arrow["a", from=6-7, to=5-8]
	\arrow["d", from=6-1, to=6-3]
	\arrow["f", from=6-3, to=5-4]
	\arrow["c", from=5-4, to=6-5]
	\arrow[""{name=5, anchor=center, inner sep=0}, "g"', from=6-3, to=6-5]
	\arrow["e", from=5-8, to=6-9]
	\arrow[""{name=6, anchor=center, inner sep=0}, "d"', from=6-7, to=6-9]
	\arrow["g"', from=6-9, to=6-11]
	\arrow["d", from=7-7, to=7-9]
	\arrow[""{name=7, anchor=center, inner sep=0}, "g", from=7-9, to=7-11]
	\arrow["l"', from=7-9, to=8-10]
	\arrow["m"', from=8-10, to=7-11]
	\arrow["c", from=1-4, to=2-5]
	\arrow["a", from=2-7, to=1-8]
	\arrow[""{name=8, anchor=center, inner sep=0}, "b", from=1-8, to=1-10]
	\arrow["e"', from=1-8, to=2-9]
	\arrow["f"', from=2-9, to=1-10]
	\arrow["c", from=1-10, to=2-11]
	\arrow[""{name=9, anchor=center, inner sep=0}, "d", from=9-7, to=9-9]
	\arrow["l", from=9-9, to=10-10]
	\arrow["h"', from=9-7, to=10-8]
	\arrow["i"', from=10-8, to=10-10]
	\arrow["m"', from=10-10, to=9-11]
	\arrow["\alpha", shorten <=8pt, shorten >=8pt, Rightarrow, from=0, to=2-3]
	\arrow["\gamma", shorten <=8pt, shorten >=8pt, Rightarrow, from=3-10, to=4]
	\arrow["\varphi"', shorten <=8pt, shorten >=8pt, Rightarrow, from=2, to=8-4]
	\arrow["\beta"', shorten <=8pt, shorten >=8pt, Rightarrow, from=3-2, to=1]
	\arrow["\delta"', shift left=8, yshift=-2mm, shorten <=8pt, shorten >=8pt, Rightarrow, from=3, to=10-2]
	\arrow["\delta"', shift left=8, yshift=-2mm, shorten <=8pt, shorten >=8pt, Rightarrow, from=9, to=10-8]
	\arrow["\gamma", shorten <=8pt, shorten >=8pt, Rightarrow, from=5-4, to=5]
	\arrow["\beta", shorten <=8pt, shorten >=8pt, Rightarrow, from=5-8, to=6]
	\arrow["\varphi"', shorten <=8pt, shorten >=8pt, Rightarrow, from=7, to=8-10]
	\arrow["\alpha", shorten <=8pt, shorten >=8pt, Rightarrow, from=8, to=2-9]
\end{tikzcd}\]
However for more general weak higher categories the interchange law might hold just up to coherent isomorphism so this algorithm needs to be modified. Therefore the uniqueness of the pasting composite must be interpreted in a suitable way, namely as a contractibility condition on the space of composites of the pasting diagram (see for instance \cite{hackney2021infty}). This is indeed the case for the current work, where we will be dealing with the semi-strict case of a pasting diagram in a $\Gray$-category, a particular notion of $3$-dimensional category where the middle four interchange law is not strict but it is instead part of coherence data. In particular, we will prove the following theorem.
\begin{mythm}{4.24}
Every $2$-dimensional pasting diagram in a $\Gray$-category has a unique composition up to a contractible groupoid of choices.
\end{mythm}
It has to be noticed that we are only considering pasting composites of $2$-cells. Furthermore the composites with which we are dealing are the ones obtained as outputs of the (nondeterministic) algorithm described earlier in the Introduction. With some effort one could show that every possible composite $-$ living in the free $\Gray$-category on the $\Gray$-computad underlying the $2$-dimensional pasting diagram $-$ arises in this way, but it would be beyond the scope of this paper which is actually concerned with giving a justification to the practice of pasting inside a $\Gray$-category for how is routinely done. As far as we know relating these two notions, one more geometrical and the other more combinatorial in nature, is still an open problem. The result we present makes precise an observation that can be found in section $5.2$ of the seminal work \cite{gordon1995coherence}. It also provides a proof to a conjecture stated in Remark $2.2.14$ of \cite{NDVmres}.

The proof of our pasting theorem uses techniques from rewriting theory, but no previous knowledge of it is required. We will recall the basics of this theory along the way.

\textbf{Acknowledgments.} I am deeply grateful to my supervisor Dominic Verity for the many insights that he shared with me while I was writing this paper and to the anonymous referee for greatly improving the readability of the text, as well as pointing out some mistakes that have now been corrected. I also acknowledge the support of an International Macquarie University Research Excellence Scholarship.

\section{Preliminaries on relations}
At first let us recall some basic facts about relations, since we will use them later for rewriting. In accordance with the literature, whenever $\cR$ a relation on a set $X$ we will write $x\cancel\cR y$ to denote that $(x,y)\notin \cR$.
\begin{defn}
A relation $\cR$ on a set $X$ is said to be \emph{irreflexive} if $\forall x\in X \ x\cancel\cR x$.
\end{defn}

\begin{defn}
A relation $\cR$ on a set $X$ is said to be \emph{asymmetric} if for all $x,y\in X$ we have that $x\cR y \implies y\cancel\cR x$. An irreflexive, asymmetric and transitive relation is called a \emph{strict partial order}.
\end{defn}
\begin{rmk}\label{irr and tr}
An irreflexive and transitive relation is also asymmetric, hence a strict partial order. 
\end{rmk}
A set equipped with a strict partial order will be called \emph{strict poset}. A \emph{strict linear order} (also called \emph{strict total order}) is a strict partial order for which any two elements are comparable.  \begin{defn} 
We say that $(A,<^*)$ is a \emph{strict linear extension} of a strict poset $(A,<)$ if 
\begin{enumerate}
    \item $<^*$ is a \emph{strict linear order};
    \item for every $a,b\in A$, we have that $a < b\implies a<^* b$.
\end{enumerate}
In other words, a strict linear extension is a strict linear order that contains the given partial order.
\end{defn}
\begin{rmk}
In general there is more than one linear extension of a given poset, for instance $(\{x, y\}, =)$ with $x\ne y$ can be extended to a linear order either by choosing $x< y$ or $y < x$. 
\end{rmk}
Given an irreflexive relation $\cR$ we can ask if its transitive closure is still irreflexive, so that by Remark \ref{irr and tr} it is a strict partial order. For this to happen it is enough that $\cR$ is \emph{acyclic}, i.e.\ there are no $x_1, x_2, \dots, x_n\in X$ s.t.\ $x_1\cR x_2$ and $x_2\cR x_3$ and \dots and $x_n\cR x_1$ ($x_1\cR x_2\cR x_3\cdots x_n\cR x_1$ for short). Whenever this condition holds, we define a strict linear extension of $\cR$ to be a strict linear extension of its transitive closure. The following proposition guarantees that in such a case a linear extension always exists.  
\begin{prop}\label{acyclic iff well founded}
Let $(X, \cR)$ be a finite set endowed with an irreflexive relation. The following are equivalent:
\begin{enumerate}
    \item[(a)] $\cR$ is acyclic;
    \item[(b)] $\cR$ is well founded, i.e.\ $\forall S\subseteq X, \  S\ne\emptyset, \ \exists m\in S \ \forall s\in S \ s\cancel\cR m$ $($called a \emph{minimal element}$)$;
    \item[(c)] $\cR$ admits a strict linear extension $<$.
    \end{enumerate}
\end{prop}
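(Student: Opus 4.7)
My plan is to prove the cyclic chain of implications (a)$\Rightarrow$(b)$\Rightarrow$(c)$\Rightarrow$(a), using finiteness essentially in the first step.

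For (a)$\Rightarrow$(b) I argue contrapositively. Suppose some nonempty $S\subseteq X$ has no minimal element: that is, for every $s\in S$ there is some $s'\in S$ with $s'\,\cR\,s$. Picking any $s_0\in S$ and iterating this property I build a sequence $s_0,s_1,s_2,\dots$ in $S$ with $s_{i+1}\,\cR\,s_i$ for every $i$. Since $S$ is finite, there must be indices $j<k$ with $s_j=s_k$, which yields the cycle $s_j=s_k\,\cR\,s_{k-1}\,\cR\,\cdots\,\cR\,s_{j+1}\,\cR\,s_j$, contradicting acyclicity.

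For (b)$\Rightarrow$(c) I induct on $|X|$. The empty case is vacuous. If $X\neq\emptyset$, well-foundedness supplies a minimal element $m\in X$, i.e.\ $s\,\cancel\cR\,m$ for every $s\in X$. The restriction of $\cR$ to $X\setminus\{m\}$ is still well-founded (every nonempty subset of $X\setminus\{m\}$ is a nonempty subset of $X$), so the inductive hypothesis yields a strict linear extension $<'$ on $X\setminus\{m\}$. I then define $<$ on $X$ by placing $m$ strictly below everything else and letting $<$ agree with $<'$ on $X\setminus\{m\}$; this is visibly a strict linear order. To check containment of $\cR$: whenever $a\,\cR\,b$ with $a,b\neq m$, inclusion holds by construction of $<'$; the case $b=m$ is ruled out by minimality of $m$; and the case $a=m$ forces $b\neq m$ by irreflexivity, so $m<b$ by fiat.

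For (c)$\Rightarrow$(a), if $<$ is a strict linear extension of $\cR$ and $x_1\,\cR\,x_2\,\cR\,\cdots\,\cR\,x_n\,\cR\,x_1$ were a cycle, then by containment $x_1<x_2<\cdots<x_n<x_1$, and transitivity of $<$ would give $x_1<x_1$, contradicting irreflexivity. The only delicate step is (a)$\Rightarrow$(b), where finiteness of $X$ is genuinely needed (on an infinite set acyclicity does not imply well-foundedness, as $(\mathbb{Z},>)$ shows); the other two implications are essentially formal once the right bookkeeping of indices and of the inductive construction is set up.
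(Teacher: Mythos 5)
Your proof is correct and follows essentially the same route as the paper's: the cyclic chain $(a)\Rightarrow(b)\Rightarrow(c)\Rightarrow(a)$, with finiteness plus a pigeonhole/repetition argument for the first implication, successive extraction of minimal elements for the second, and the $x_1<x_1$ contradiction for the third. The only differences are cosmetic (contrapositive phrasing for $(a)\Rightarrow(b)$ and a formal induction on $|X|$ in place of the paper's iterative chain of subsets for $(b)\Rightarrow(c)$).
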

\begin{proof}
$(a)\Rightarrow(b)$ Since $S\ne\emptyset$, there exists $m_1\in S$. If $\forall s\in S \ s\cancel\cR m_1$, we have $m=m_1$, otherwise there exists $m_2\in S$ s.t.\ $m_2\cR m_1$. For the same reason, either $m=m_2$ or there exists $m_3\in S$ with $m_3\cR m_2$. In the latter case, iterating this argument eventually gives an element $m_i$ we already visited (since $S$ is finite) and therefore a cycle $m_i\cR m_k\cdots m_{i+1}\cR m_i$. This contradicts the assumption that $\cR$ is acyclic, and so one of the elements we visited before $m_i$ must be minimal.    

$(b)\Rightarrow(c)$ The whole set $X$ is a subset of itself, so it has a minimal element $x_1$. The set $X\setminus\{x_1\}$ is contained in $X$, therefore has a minimal element $x_2$. We put $x_1<x_2$ in the linear extension. This choice is allowed since $x_2$ is not related to $x_1$ in the transitive closure of $\cR$ for the minimality of $x_1$ in $X$. We can go on with this procedure and build a descending chain
$$X_1=X \supset X_2=X\setminus\{x_1\}\supset X_3=X\setminus \{x_1,x_2\}\supset\cdots\supset X_{n+1}=\emptyset$$
of finite length since $X$ is finite, corresponding to the linear order on $X=\{x_1,\dots,x_n\}$ given by
$$x_1<x_2<\cdots<x_n$$
which is compatible with $\cR$ by construction.

$(c)\Rightarrow (a)$ A cycle $y_1\cR y_2\cR\cdots y_l\cR y_1$ cannot be ordered: a linear extension of $\cR$ would have to satisfy $y_1<y_2<\cdots<y_n<y_1$ and then, by transitivity of $<$, we have $y_1< y_1$ contradicting the irreflexivity of $<$. Therefore if $\cR$ has a cycle, it cannot be extended to a strict linear order. \end{proof}

\section{Rewriting Systems}
Rewriting theory is the main tool we will be using to prove the pasting theorem for $\Gray$-categories. It is indeed useful to think about the groupoid appearing in the claim of the theorem in terms of generators and relations, so that it can be studied using rewriting. For this reason, here we will briefly introduce the fundamental notion of rewriting system and the most important results related to it. For a thorough exposition of rewriting theory we refer to  \cite{baader1999term} and \cite{barendsen2003term}.
\begin{defn}
A \emph{rewriting system} is a set $A$ equipped with a binary relation $\rightarrow$, called \emph{reduction}.
\end{defn}
The idea is that if $a\rightarrow b$, we can substitute any occurrence of $a$ with $b$. For instance, in the theory of groups we can consider the free group on a set and represent its elements as words made of generators, some of whom may be formally inverted. Then we have a reduction $gg^{-1}\rightarrow e$ so we can replace every consecutive product of an element by its inverse with the identity element. 

We will denote by $\xrightarrow{*}$ the reflexive transitive closure of $\rightarrow$, namely the smallest preorder containing $\rightarrow$.

\begin{defn}
 An element $a\in A$ is said to be \emph{confluent} if for all $b,c\in A$ s.t.\ $a\xrightarrow{*} b$ and $a\xrightarrow{*} c$ there exists $d\in A$ s.t.\ $b\xrightarrow{*} d$ and $c\xrightarrow{*} d$. A rewriting system is called confluent if all its elements are confluent. 
\end{defn}

\begin{defn}
 An element $a\in A$ is said to be \emph{locally confluent} if for all $b,c\in A$ s.t.\ $a\rightarrow b$ and $a\rightarrow c$ there exists $d\in A$ s.t.\ $b\xrightarrow{*} d$ and $c\xrightarrow{*} d$. A rewriting system is called locally confluent if all its elements are locally confluent. 
\end{defn}

\begin{rmk}
The difference between local confluence and confluence is that in the former we have a one-step reduction from $a$ to $b$ and $c$, while in the latter we can reach $b$ and $c$ in more than one step.
\end{rmk}

\begin{defn}
A rewriting system is called \emph{terminating} if there is no infinite chain of the form $$a_0\rightarrow a_1 \rightarrow a_2 \rightarrow \cdots$$
\end{defn}
\noindent A trick that is often useful in showing that a rewriting system is terminating is to define a measure $\rho\colon X\to\N$ that is reduced by any application of the rewrite $\rightarrow$, namely $x\rightarrow y\Rightarrow\rho(x)>\rho(y)$.
\begin{lemma}[Newman's Lemma]
A terminating rewriting system is confluent if and only if it is locally confluent. 
\end{lemma}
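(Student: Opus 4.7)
One direction is immediate: if the system is confluent, then given any $a \to b$ and $a \to c$ we have in particular $a \xrightarrow{*} b$ and $a \xrightarrow{*} c$, so confluence produces the required common reduct $d$. The content of the lemma is the converse, and my plan is to prove it by well-founded induction on the reduction relation. Termination states that there is no infinite chain $a_0 \to a_1 \to \cdots$; this is exactly the condition that $\to$, read as a ``descends to'' relation, is well-founded, and it licenses the induction principle ``to prove a property $P$ of all elements, it suffices to prove $P(a)$ under the hypothesis that $P(b)$ holds for every $b$ with $a \to b$''. I will apply this with $P(a)$ equal to ``$a$ is confluent''.

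For the inductive step, assume $a \xrightarrow{*} b$ and $a \xrightarrow{*} c$. If either reduction takes zero steps, take $d$ to be the other endpoint, so we may assume both factor as $a \to b_0 \xrightarrow{*} b$ and $a \to c_0 \xrightarrow{*} c$. Local confluence applied to the single-step pair $a \to b_0$, $a \to c_0$ produces some $e$ with $b_0 \xrightarrow{*} e$ and $c_0 \xrightarrow{*} e$. Both $b_0$ and $c_0$ are strictly below $a$ in the well-founded order, so the induction hypothesis applies to each. Using it at $b_0$ on the pair $b_0 \xrightarrow{*} b$, $b_0 \xrightarrow{*} e$ gives $f$ with $b \xrightarrow{*} f$ and $e \xrightarrow{*} f$. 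A second appeal at $c_0$, applied to $c_0 \xrightarrow{*} c$ and $c_0 \xrightarrow{*} e \xrightarrow{*} f$, yields $d$ with $c \xrightarrow{*} d$ and $f \xrightarrow{*} d$. Then $b \xrightarrow{*} f \xrightarrow{*} d$ and $c \xrightarrow{*} d$, as required.

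The proof is just this short diagram chase, so there is no deep obstacle; the main subtlety is conceptual, namely noticing that termination is used in exactly one place --- to turn the absence of infinite descending chains into the well-founded induction principle. This is also why the lemma genuinely needs termination: without it, the tile produced by local confluence can spiral indefinitely and never close up into a full confluence square.
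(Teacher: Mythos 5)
Your proof is correct and is essentially the argument the paper sketches (and attributes to Huet): close the one-step fork by local confluence, then close the two remaining spans by the induction hypothesis at the one-step successors. Your framing via well-founded induction on the element (rather than the paper's looser ``induction on the derivation length'') is in fact the precise way to make that tessellation rigorous, since the reductions produced by local confluence need not be shorter.
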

A short proof of this lemma, using induction, can be found in \cite{huet1980confluent}. For a given $a\in A$ and rewrites $a\xrightarrow{*} b$ and $a\xrightarrow{*} c$, a key point in proving that a diamond
\[\begin{tikzcd}
	& a \\
	b && c \\
	& d
	\arrow["{*}"', from=1-2, to=2-1]
	\arrow["{*}"', from=2-1, to=3-2]
	\arrow["{*}", from=1-2, to=2-3]
	\arrow["{*}", from=2-3, to=3-2]
\end{tikzcd}\]
does indeed exist is to use induction on the derivation length and tessellate it as follows
\[\begin{tikzcd}
	&& a \\
	& {b'} && {c'} \\
	b && {d'} && c \\
	& {d''} \\
	&& d
	\arrow[from=1-3, to=2-2]
	\arrow["{*}"', from=2-2, to=3-1]
	\arrow["{*}"', from=3-1, to=4-2]
	\arrow["{*}", from=2-2, to=3-3]
	\arrow["{*}", from=3-3, to=4-2]
	\arrow[from=1-3, to=2-4]
	\arrow["{*}"', from=2-4, to=3-3]
	\arrow["{*}", from=2-4, to=3-5]
	\arrow["{*}", from=3-5, to=5-3]
	\arrow["{*}"', from=4-2, to=5-3]
	\arrow["{(I)}"{marking}, draw=none, from=2-2, to=2-4]
	\arrow["{(II)}"{marking}, draw=none, from=3-1, to=3-3]
	\arrow["{(III)}"{description}, draw=none, from=4-2, to=3-5]
\end{tikzcd}\]
obtaining $(I)$ by local confluence, while $(II)$ and $(III)$ follow by inductive hypothesis. This idea will also be important in the proof of the pasting theorem, where the smaller diagrams are actually commutative.

An important consequence of Newman's lemma is the existence and unicity of a minimal element (called \emph{normal form}) in every connected component\footnote{We define a connected component of a relation $(X,\cR)$ as a connected component of the corresponding directed graph having $V=X$ and a directed edge $x\to y$ whenever $x\cR y$.} of $\rightarrow$.

\section{The pasting theorem}
In this section we will provide the proof of the main result, namely Theorem \ref{pasting theorem}, using rewriting techniques. Before that, we review the notion of $\Gray$-category and the graph-theoretical concepts needed to formalize the intuition behind pasting diagrams.

A $\Gray$-category is a particular instance of enriched category (see \cite{kelly1982basic} for the general definition of enriched category). In particular, we can define it in a very concise way as follows.
\begin{defn}
A $\Gray$-category is a category enriched over the monoidal category $(\twoCat, \otimes, \bb1)$ of $2$-categories and strict $2$-functors equipped with the Gray tensor product.
\end{defn}
Unpacking this definition, a $\Gray$-category $\cK$ consists of the following data:
\begin{enumerate}[i)]
    \item a class of objects $\cK_0$,
    \item for each couple of objects $A$ and $B$ in $\cK$, a $2$-category $\cK(A,B)$,
    \item for every $A\in\cK$ an identity $1$-cell $\id_A\colon A\to A$,
    \item a composition $2$-functor $c_{A,B,C}\colon\cK(B,C)\otimes\cK(A,B)\to\cK(A,C)$ from the Gray tensor product between the $2$-categories $\cK(B,C)$ and $\cK(A,B)$ satisfying associativity and unitality rules.  
\end{enumerate}
Explicitly, the $2$-category $\cK(B,C)\otimes\cK(A,B)$ is defined as follows:
\begin{itemize}
    \item $\Ob(\cK(B,C)\otimes\cK(A,B))=\Ob(\cK(B,C))\times\Ob(\cK(A,B))$,
    \item $1$-cells generated by $(\alpha,g)\colon (f,g)\to (f',g)$ and $(f,\beta)\colon (f,g)\to(f,g')$ with $\alpha\colon f\to f'$ in $\cK(B,C)$ and $\beta\colon g\to g'$ in $\cK(A,B)$ subject to the relations $(\alpha'\alpha,g)=(\alpha',g)(\alpha,g)$, $(f,\beta'\beta)=(f,\beta')(f,\beta)$ whenever these pairs are composable and $\id_{(f,g)}=(\id_f,g)=(f,\id_g)$. 
    \item $2$-cells generated by 
    \[\begin{tikzcd}
	{(f,g)} && {(f',g)} & {(f,g)} && {(f,g')}
	\arrow[""{name=0, anchor=center, inner sep=0}, "{(\alpha,g)}", bend left=45, from=1-1, to=1-3]
	\arrow[""{name=1, anchor=center, inner sep=0}, "{(\alpha',g)}"', bend right=45, from=1-1, to=1-3]
	\arrow[""{name=2, anchor=center, inner sep=0}, "{(f,\beta)}", bend left=45, from=1-4, to=1-6]
	\arrow[""{name=3, anchor=center, inner sep=0}, "{(f,\beta')}"', bend right=45, from=1-4, to=1-6]
	\arrow["{(\Phi,g)}", shorten <=17pt, shorten >=17pt, Rightarrow, from=0, to=1]
	\arrow["{(f,\Psi)}", shorten <=17pt, shorten >=17pt, Rightarrow, from=2, to=3]
\end{tikzcd}\]
for any $\Phi\colon\alpha\Rightarrow\alpha'$ in $\cK(B,C)$ and $\Psi\colon\beta\Rightarrow\beta'$ in $\cK(A,B)$ satisfying relations for vertical and horizontal compositions similar to the ones we have for $1$-cells (see Definition 12.2.5 in \cite{johnson20212} for more details). In addition, we have generating $2$-cells (sometimes called \emph{Gray cells}):
\[\begin{tikzcd}
	{(f,g)} & {(f,g')} \\
	{(f',g)} & {(f',g')}
	\arrow["{(\alpha,g')}", from=1-2, to=2-2]
	\arrow["{(\alpha,g)}"', from=1-1, to=2-1]
	\arrow["{(f',\beta)}"', from=2-1, to=2-2]
	\arrow["{(f,\beta)}", from=1-1, to=1-2]
	\arrow["{\gamma_{\alpha,\beta}}", shorten <=13pt, shorten >=13pt, Rightarrow, from=1-2, to=2-1]
\end{tikzcd}\]
which are invertible for the pseudo version and oriented in either way for the lax/colax version of the Gray tensor product. These $2$-cells are subject to the relations 

\[\begin{tikzcd}
	{(f,g)} & {(f,g')} & {(f,g)} & {(f,g')} \\
	{(f',g)} & {(f',g')} & {(f',g)} & {(f',g')}
	\arrow["{(f,\beta)}", from=1-1, to=1-2]
	\arrow[""{name=0, anchor=center, inner sep=0}, "{(\alpha,g')}", from=1-2, to=2-2]
	\arrow[""{name=1, anchor=center, inner sep=0}, "{(\alpha,g)}", from=1-1, to=2-1]
	\arrow["{(f',\beta)}"', from=2-1, to=2-2]
	\arrow[""{name=2, anchor=center, inner sep=0}, "{(\alpha',g)}"', bend right = 75pt, from=1-1, to=2-1]
	\arrow[""{name=3, anchor=center, inner sep=0}, "{(\alpha',g)}"', from=1-3, to=2-3]
	\arrow["{(f,\beta)}", from=1-3, to=1-4]
	\arrow["{(f',\beta)}"', from=2-3, to=2-4]
	\arrow[""{name=4, anchor=center, inner sep=0}, "{(\alpha',g')}"', from=1-4, to=2-4]
	\arrow[""{name=5, anchor=center, inner sep=0}, "{(\alpha,g')}", bend left = 75pt, from=1-4, to=2-4]
	\arrow["{\gamma_{\alpha',\beta}}"', shorten <=13pt, shorten >=13pt, Rightarrow, from=1-4, to=2-3]
	\arrow["{\gamma_{\alpha,\beta}}", shorten <=13pt, shorten >=13pt, Rightarrow, from=1-2, to=2-1]
	\arrow["{=}"{description}, draw=none, from=0, to=3]
	\arrow["{(\Phi,g')}", shorten <=7pt, shorten >=7pt, Rightarrow, from=5, to=4]
	\arrow["{(\Phi,g)}"', shorten <=7pt, shorten >=7pt, Rightarrow, from=1, to=2]
\end{tikzcd}\]
\[\begin{tikzcd}
	{(f,g)} & {(f,g')} & {(f,g)} & {(f,g')} \\
	{(f,g)} & {(f,g')} & {(f,g)} & {(f,g')}
	\arrow["{(f,\beta)}", from=1-1, to=1-2]
	\arrow[""{name=0, anchor=center, inner sep=0}, "{(\id_f,g')}", from=1-2, to=2-2]
	\arrow["{(\id_f,g)}"', from=1-1, to=2-1]
	\arrow["{(f,\beta)}"', from=2-1, to=2-2]
	\arrow["{(f,\beta)}", from=1-3, to=1-4]
	\arrow["{\id_{(f,g')}}", from=1-4, to=2-4]
	\arrow[""{name=1, anchor=center, inner sep=0}, "{\id_{(f,g)}}"', xshift=1mm, from=1-3, to=2-3]
	\arrow["{(f,\beta)}"', from=2-3, to=2-4]
	\arrow["{\gamma_{\id_f,\beta}}"', shorten <=12pt, shorten >=12pt, Rightarrow, from=1-2, to=2-1]
	\arrow["{=}"{marking}, draw=none, from=2-3, to=1-4]
	\arrow["{=}"{description}, xshift=1mm, Rightarrow, draw=none, from=0, to=1]
\end{tikzcd}\]
\newline
\[\begin{tikzcd}
	{(f,g)} & {(f,g')} & {(f,g)} & {(f,g')} \\
	{(f',g)} & {(f',g')} \\
	{(f'',g)} & {(f'',g')} & {(f'',g)} & {(f'',g')}
	\arrow["{(f,\beta)}", from=1-1, to=1-2]
	\arrow["{(\alpha,g')}", from=1-2, to=2-2]
	\arrow["{(\alpha,g)}"', from=1-1, to=2-1]
	\arrow["{(f',\beta)}", from=2-1, to=2-2]
	\arrow["{(\alpha',g)}"', from=2-1, to=3-1]
	\arrow["{(f'',\beta)}"', from=3-1, to=3-2]
	\arrow["{(\alpha',g')}", from=2-2, to=3-2]
	\arrow["{\gamma_{\alpha,\beta}}"', shorten <=13pt, shorten >=13pt, Rightarrow, from=1-2, to=2-1]
	\arrow["{\gamma_{\alpha',\beta}}"', shorten <=13pt, shorten >=13pt, Rightarrow, from=2-2, to=3-1]
	\arrow["{(f,\beta)}", from=1-3, to=1-4]
	\arrow[""{name=0, anchor=center, inner sep=0}, "{(\alpha'\alpha,g)}"{description}, from=1-3, to=3-3]
	\arrow["{(f'',\beta)}"', from=3-3, to=3-4]
	\arrow["{(\alpha'\alpha,g')}"{description}, from=1-4, to=3-4]
	\arrow["{\gamma_{\alpha'\alpha,\beta}}"'{pos=0.4}, shorten <=34pt, shorten >=34pt, Rightarrow, from=1-4, to=3-3]
	\arrow["{=}"{marking, pos=0.3}, draw=none, from=2-2, to=0]
\end{tikzcd}\]
as well as their horizontal analogues.
\end{itemize}
In this paper we deal only with $\Gray$-categories in their pseudo version. Namely, categories enriched over $\twoCat$ equipped with the pseudo-Gray tensor product or equivalently $\Gray$-categories whose Gray cells are invertible. We turn now to make precise the notion of pasting diagram inside a $\Gray$-category. The key idea is to capture the structure of our pasting diagrams using a graph which will be labelled in components of a $\Gray$-category. For us a graph will be given by a pair of sets $(V,E)$, respectively called vertices and edges, with $E$ being a set of paired vertices.
\begin{defn}
A graph is said to be \emph{finite} if both the vertex and edge sets are finite.
\end{defn}
\begin{defn}
A directed graph is \emph{connected} if its underlying undirected graph is connected.  
\end{defn}
\begin{defn}
 A \emph{plane graph} is a graph together with a specified embedding of it in the plane.  
\end{defn}
We assume throughout the paper that the plane is oriented with the usual orientation and that the aforementioned embedding preserves this orientation. 
All of the following graphs will be assumed to be plane, directed, connected and finite. 
\begin{defn}
A \emph{graph with source $s$ and sink $t$} is a graph with distinct vertices $s$ and $t$ such that for every vertex $v$ there exist directed paths from $s$ to $v$ and from $v$ to $t$.
\end{defn}
The previous definition makes sense also for graphs that are not plane, but in the following we will focus on the plane case. We can now introduce the fundamental graph-theoretical tool used to formalize pasting diagrams, as defined in \cite{power19902}, that we will employ in the rest of the paper.
\begin{defn}
A \emph{pasting scheme} $\mathcal{G}$ is an \emph{acyclic} graph with source $s$ and sink $t$.
\end{defn}
\begin{figure}
    \centering
   \[\begin{tikzcd}
	&&& {v_1} & {v_2} \\
	{} & {} & s & {v_3} & {} & t \\
	&&& {v_4} & {v_5}
	\arrow[from=2-3, to=2-4]
	\arrow[from=2-3, to=1-4]
	\arrow[from=1-4, to=1-5]
	\arrow[from=2-4, to=1-5]
	\arrow[from=1-5, to=2-6]
	\arrow[from=2-4, to=2-6]
	\arrow[from=2-3, to=3-4]
	\arrow[from=3-4, to=3-5]
	\arrow[from=3-5, to=2-6]
	\arrow[from=2-4, to=3-5]
	\arrow["{F_1}"{description}, draw=none, from=1-4, to=2-4]
	\arrow["{F_2}"{description}, draw=none, from=1-5, to=2-5]
	\arrow["{F_3}"{description}, draw=none, from=2-4, to=3-4]
	\arrow["{F_4}"{description}, draw=none, from=2-5, to=3-5]
\end{tikzcd}\]
    \caption{Example of pasting scheme}
    \label{fig:pasting scheme}
\end{figure}
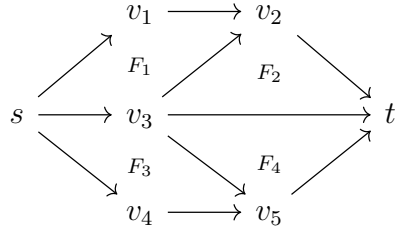
It is important to notice that the acyclicity condition for pasting schemes refers to the lack of \emph{directed} cycles. The underlying undirected graph of a pasting scheme has generally many cycles, namely the boundaries of the internal faces of the pasting scheme. In \cite{power19902} the author also proves the following characterization of pasting schemes.
\begin{prop} A graph $\mathcal{G}$ with source $s$ and sink $t$ is a pasting scheme if and only if for every interior face $F$ there exist distinct vertices $s_F$ and $t_F$ and directed paths $\sigma_F$ and $\tau_F$ from $s_F$ to $t_F$ such that the boundary of $F$ is the directed path $\sigma_F\tau_F^*$, where by $\tau_F^*$ we mean that $\tau_F$ is traversed in the opposite way. The vertices $s_F, t_F$ and directed paths $\sigma_F, \tau_F$ are necessarily unique.
\end{prop}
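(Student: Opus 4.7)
The proof splits into the two implications, and the uniqueness claim will fall out of the forward direction.

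For the forward implication, I fix an interior face $F$ and consider its boundary cycle $C_F$ in the underlying undirected graph. Each vertex of $C_F$ can be classified as a \emph{local source} of $F$ (both incident boundary edges pointing away from it), a \emph{local sink} of $F$ (both pointing into it), or \emph{transitional} (one in, one out). An elementary alternation argument shows that the local sources and local sinks of $F$ interleave around $C_F$, so they occur in equal numbers $k \geq 0$. Acyclicity of $\mathcal{G}$ immediately rules out $k = 0$, since then $C_F$ itself would be a directed cycle. The crux of the forward direction is showing $k = 1$: I would assume $k \geq 2$ and derive a contradiction by combining one of the directed boundary arcs of $F$ running between consecutive local extrema with directed paths through the rest of the graph, using the existence of paths $s \to v$ and $v \to t$ for every vertex $v$ together with the planar embedding of $F$, in order to exhibit a directed cycle in $\mathcal{G}$. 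Once $k = 1$, the two directed boundary arcs of $F$ between the unique local source $s_F$ and the unique local sink $t_F$ are forced to be $\sigma_F$ and $\tau_F$.

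For the reverse implication, assume every interior face decomposes as described and suppose for contradiction that $\mathcal{G}$ contains a directed cycle $\gamma$. Since $\mathcal{G}$ is plane, $\gamma$ bounds a topological disk built from finitely many interior faces, so I pick a face $F$ whose boundary meets $\gamma$ in at least one edge. The hypothesis $\partial F = \sigma_F \tau_F^*$ forces every edge of $\gamma$ lying on $\partial F$ to be traversed with the orientation agreeing with both $\gamma$ and $\partial F$, and hence to belong entirely to either $\sigma_F$ or $\tau_F$. Replacing that segment of $\gamma$ by the complementary directed boundary arc of $F$ produces a new directed closed walk enclosing strictly fewer faces; iterating this surgery yields a directed cycle enclosing no face at all, which is impossible in a plane graph. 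Uniqueness of $s_F$, $t_F$, $\sigma_F$, $\tau_F$ is then immediate: the uniqueness of the local source and sink of $F$ pins down the two endpoints, and the two directed boundary arcs between them are then the only candidates for $\sigma_F$ and $\tau_F$.

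The main obstacle is unquestionably the step $k = 1$ in the forward direction; the alternation, the case $k = 0$, the surgery in the reverse direction, and the uniqueness statement are each essentially formal, but ruling out multiple local sources on a single face requires a careful interaction between planarity, global acyclicity, and the source/sink structure of $\mathcal{G}$, and it is here that I would expect to spend most of the effort.
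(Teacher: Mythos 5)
The paper itself offers no proof of this proposition; it is quoted verbatim from \cite{power19902}, where it is Proposition~2.6, so there is no in-text argument to measure yours against. Judged on its own terms, your outline correctly locates the crux but does not supply it. The alternation of local sources and sinks, the exclusion of $k=0$, and the derivation of uniqueness from $k=1$ are indeed routine. But the step $k=1$ --- which you yourself identify as the main obstacle --- is left as a declaration of intent: you say you ``would'' combine a boundary arc of $F$ with directed paths from $s$ and to $t$ and with the planar embedding ``in order to exhibit a directed cycle,'' without exhibiting one. This is precisely where the real work lies. In Power's argument one takes interleaved local sources $v_1,v_2$ and local sinks $u_1,u_2$ on the boundary of $F$, directed paths from $s$ into each $v_i$ and from each $u_j$ out to $t$; since $F$ is a face, these four paths are confined to the closed exterior of the boundary cycle, and a Jordan-curve argument forces one of the outgoing paths to meet one of the incoming paths, after which a directed cycle is assembled from the two paths together with a directed arc of the face boundary. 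None of this is formal, and your sketch gives no indication of how the forced intersection is converted into a cycle. As it stands, the forward implication is not proved.

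The reverse implication also has a defect as written. If the segment of the directed cycle $\gamma$ lying on the boundary of $F$ is a \emph{proper} subpath of $\sigma_F$, then ``the complementary directed boundary arc of $F$'' is not a directed walk: to close up it would have to traverse the remaining portion of $\sigma_F$ against its orientation before reaching $\tau_F$. The surgery therefore only works when all of $\sigma_F$ (or all of $\tau_F$) lies on $\gamma$, and you would need a further argument --- for instance choosing $\gamma$ to enclose a minimal number of faces and selecting $F$ carefully --- to reduce to that case. A smaller point: your classification of boundary vertices into local sources, sinks and transitional vertices tacitly assumes the boundary of an interior face is a simple cycle, which deserves a word of justification for graphs with cut vertices. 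In summary, the architecture of your argument is reasonable, but the single step on which everything hinges is asserted rather than proved, and the concluding surgery is not yet sound.
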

If $E$ is the exterior face of $\mathcal{G}$, then $\tau_E$ is the \emph{top} path from $s$ to $t$ and $\sigma_E$ is the \emph{bottom} path.
\begin{defn} For a couple of faces $F_1$ and $F_2$ in a pasting scheme $\mathcal{G}$, we define a relation $F_1\triangleleft_{\mathcal{G}} F_2$ if and only if $\tau_{F_1}$ and $\sigma_{F_2}$ share at least one edge. 
\end{defn}

\begin{rmk}\label{properties triangl}
The relation $\triangleleft_{\mathcal{G}}$ has the following properties:
\begin{itemize}
    \item it is irreflexive, by Corollary $2.7 \ (2)$ of \cite{power19902}, 
    \item it can be extended to a strict linear order, concretely provided by the algorithm used in Proposition $2.10$ of \cite{power19902} to find a minimal element for $\triangleleft_{\mathcal{G}}$.
\end{itemize}
Hence, by Proposition \ref{acyclic iff well founded}, it is also acyclic and well-founded. From now on we will implicitly assume that every path is directed.
\end{rmk}
\begin{defn}
Given two faces $F_1$ and $F_2$ in a pasting scheme $\mathcal{G}$, we define a relation $F_1\prec_{\mathcal{G}} F_2$ if and only if there exists a (possibly empty) path $t_{F_1}\rightsquigarrow s_{F_2}$.
\end{defn}
\begin{prop}
The relation $F_1\prec_{\mathcal{G}} F_2$ is a strict partial order.  \end{prop}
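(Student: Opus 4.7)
The plan is to verify irreflexivity and transitivity directly, since Remark~\ref{irr and tr} then supplies asymmetry for free, yielding a strict partial order.

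For transitivity, suppose $F_1 \prec_{\mathcal{G}} F_2$ and $F_2 \prec_{\mathcal{G}} F_3$, so there are (possibly empty) directed paths $p\colon t_{F_1}\rightsquigarrow s_{F_2}$ and $q\colon t_{F_2}\rightsquigarrow s_{F_3}$. The face $F_2$ itself provides a nonempty directed path $\sigma_{F_2}$ (or equivalently $\tau_{F_2}$) from $s_{F_2}$ to $t_{F_2}$, by the characterization of interior faces recalled above. Concatenating $p$, $\sigma_{F_2}$ and $q$ produces a directed path $t_{F_1}\rightsquigarrow s_{F_3}$, witnessing $F_1 \prec_{\mathcal{G}} F_3$.

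For irreflexivity, suppose toward a contradiction that $F \prec_{\mathcal{G}} F$ for some interior face $F$, i.e.\ there is a possibly empty path $r\colon t_F \rightsquigarrow s_F$. If $r$ is empty, then $t_F = s_F$, contradicting the fact that $s_F$ and $t_F$ are distinct vertices. If $r$ is nonempty, concatenating $r$ with the nonempty directed path $\sigma_F\colon s_F\rightsquigarrow t_F$ produces a directed cycle in $\mathcal{G}$, contradicting the acyclicity condition in the definition of pasting scheme.

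Combining the two properties and appealing to Remark~\ref{irr and tr} concludes the proof. I do not anticipate any genuine obstacle here; the only mild subtlety is remembering to allow the empty path in the definition of $\prec_{\mathcal{G}}$ and handling that case separately in the irreflexivity argument via the distinctness of $s_F$ and $t_F$.
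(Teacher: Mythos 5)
Your proof is correct and follows essentially the same route as the paper: irreflexivity and transitivity are each established by exhibiting the same concatenated directed paths and invoking acyclicity of the pasting scheme. The only (harmless) difference is that you obtain asymmetry from Remark~\ref{irr and tr}, whereas the paper also spells out a direct cycle argument for it.
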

\begin{proof} We write $t_{F_1}\xra{F_1\prec_{\mathcal{G}} F_2}s_{F_2}$ to represent a path from $t_{F_1}$ to $s_{F_2}$ that witnesses their relationship. Let us prove that the relation $\prec_{\mathcal{G}}$ satisfies irreflexivity, transitivity and asymmetry. 

\noindent \emph{Irreflexivity}: if not, we would have the directed cycle 
$$t_{F_1}\xra{F_1\prec_{\mathcal{G}} F_1} s_{F_1}\xra{\sigma_{F_1}} t_{F_1}$$
\emph{Transitivity}: suppose $F_1\prec_{\mathcal{G}} F_2$ and $F_2\prec_{\mathcal{G}} F_3$ therefore there exist $t_{F_1}\rightsquigarrow s_{F_2}$ and
$t_{F_2}\rightsquigarrow s_{F_3}$, giving a directed path
$$t_{F_1}\xra{F_1\prec_{\mathcal{G}} F_2} s_{F_2}\xra{\sigma_{F_2}}t_{F_2}\xra{F_2\prec_{\mathcal{G}} F_3}s_{F_3}$$
and so $F_1\prec_{\mathcal{G}} F_3$.
\newline
\emph{Asymmetry}: we have to show that $F_1\prec_{\mathcal{G}} F_2\Rightarrow F_2\cancel{\prec_{\mathcal{G}}}F_1$. Suppose by contradiction that $F_2\prec_{\mathcal{G}} F_1$, then there exists a directed path $t_{F_2}\rightsquigarrow s_{F_1}$ which would give a directed cycle
$$s_{F_1}\xra{\sigma_{F_1}} t_{F_1}\xra{F_1\prec_{\mathcal{G}} F_2} s_{F_2}\xra{\sigma_{F_2}}t_{F_2}\xra{F_2\prec_{\mathcal{G}} F_1} s_{F_1}$$
that contradicts the acyclicity of the pasting scheme.
\end{proof}
\begin{rmk}
If $F_1\prec_{\mathcal{G}} F_2$ then $\sigma_{F_1}$ and $\sigma_{F_2}$ must both lie on a path 
$$s\rightsquigarrow s_{F_1}\xra{\sigma_{F_1}} t_{F_1}\xra{F_1\prec_{\mathcal{G}} F_2} s_{F_2}\xra{\sigma_{F_2}} t_{F_2} \rightsquigarrow t$$
from source to sink of the pasting scheme. 
\end{rmk}
\begin{exmp} The relations $\triangleleft_{\mathcal{G}}$ and $\prec_{\mathcal{G}}$ are unrelated, in the sense that neither one is contained in the other. For instance if $\mathcal{G}$ is the pasting scheme
\[\begin{tikzcd}
	&& v \\
	s &&&& t
	\arrow[""{name=0, anchor=center, inner sep=0}, bend left=45, from=2-1, to=1-3]
	\arrow[""{name=1, anchor=center, inner sep=0}, bend left=45, from=1-3, to=2-5]
	\arrow[""{name=2, anchor=center, inner sep=0}, bend right=45, from=2-1, to=1-3]
	\arrow[""{name=3, anchor=center, inner sep=0}, bend right=45, from=1-3, to=2-5]
	\arrow[""{name=4, anchor=center, inner sep=0}, bend right=45, from=2-1, to=2-5]
	\arrow["{F_1}", shorten <=14pt, shorten >=14pt, phantom, from=0, to=2]
	\arrow["{F_2}", shorten <=22pt, shorten >=22pt, phantom, from=1-3, to=4]
	\arrow["{F_3}", shorten <=14pt, shorten >=14pt, phantom, from=1, to=3]
\end{tikzcd}\]
we have $F_1\triangleleft_{\mathcal{G}} F_2$, $F_3\triangleleft_{\mathcal{G}} F_2$ and $F_1\prec_{\mathcal{G}} F_3$.
\end{exmp}
Nevertheless, there exists a connection between the two relations. In fact, the following holds.
\begin{prop}
Let $\triangleleft_{\mathcal{G}}^{t}$ denote the transitive closure of $\triangleleft_{\mathcal{G}}$. We have that $F_1\cancel{\triangleleft_{\mathcal{G}}^{t}}F_2$ and $F_2\cancel{\triangleleft_{\mathcal{G}}^{t}}F_1$ if and only if $F_1\prec_{\mathcal{G}} F_2$ or $F_2\prec_{\mathcal{G}} F_1$.
\end{prop}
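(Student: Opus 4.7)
For the direction $(\Leftarrow)$, I suppose $F_1 \prec_{\mathcal{G}} F_2$ witnessed by a directed path $P\colon t_{F_1}\rightsquigarrow s_{F_2}$, the case $F_2\prec_{\mathcal{G}} F_1$ being symmetric. In the base case, if $F_1\triangleleft_{\mathcal{G}} F_2$, the shared edge $u\to v$ of $\tau_{F_1}$ and $\sigma_{F_2}$ provides the sub-path $s_{F_2}\rightsquigarrow u$ along $\sigma_{F_2}$ and the sub-path $v\rightsquigarrow t_{F_1}$ along $\tau_{F_1}$, which combine with $P$ into the directed cycle
\[
s_{F_2}\rightsquigarrow u\to v\rightsquigarrow t_{F_1}\rightsquigarrow s_{F_2},
\]
contradicting the acyclicity of $\mathcal{G}$; the case $F_2\triangleleft_{\mathcal{G}} F_1$ is ruled out by an analogous cycle built from a shared edge on $\tau_{F_2}$ and $\sigma_{F_1}$. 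For a longer chain $F_1 = H_0\triangleleft_{\mathcal{G}} H_1\triangleleft_{\mathcal{G}}\cdots\triangleleft_{\mathcal{G}} H_k = F_2$ with $k\geq 2$, I combine the first and last shared edges $u_0\to v_0$ and $u_{k-1}\to v_{k-1}$ with $P$ to derive a directed path $v_0\rightsquigarrow u_{k-1}$ in $\mathcal{G}$, since $v_0\leq t_{F_1}\leq s_{F_2}\leq u_{k-1}$. Using that each intermediate face $H_i$ has empty planar interior and that $\sigma_{H_i}$ and $\tau_{H_i}$ share only the endpoints $s_{H_i}$ and $t_{H_i}$, such a derived path is forced to traverse one of these endpoints and hence to create a directed cycle with a sub-path of $\sigma_{H_i}$ or $\tau_{H_i}$, again contradicting acyclicity.

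For the direction $(\Rightarrow)$, I argue contrapositively: assume $F_1 \not\prec_{\mathcal{G}} F_2$ and $F_2 \not\prec_{\mathcal{G}} F_1$, and construct a $\triangleleft_{\mathcal{G}}^t$-chain between $F_1$ and $F_2$. Each shared edge between two interior faces canonically orients a $\triangleleft_{\mathcal{G}}$-adjacency, since two faces cannot simultaneously share an edge on both their $\sigma$'s or both their $\tau$'s (the two faces would otherwise lie on the same side of the edge in the planar embedding). The resulting dual directed graph of interior faces is connected as an undirected graph because $\mathcal{G}$ is, so there is an undirected face-adjacency path between $F_1$ and $F_2$, and I induct on its length. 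At each inductive step, the already-proved direction $(\Leftarrow)$ guarantees that every intermediate face is $\prec_{\mathcal{G}}$-incomparable with at least one endpoint of the path, which together with the strict linear extension of $\triangleleft_{\mathcal{G}}$ provided by Remark \ref{properties triangl} allows me to split the problem and concatenate directed chains using the canonical orientation at each adjacency.

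The main obstacle is the transitive case of $(\Leftarrow)$: the naïve induction on chain length fails because $\prec_{\mathcal{G}}$ need not factor through the intermediate faces of a $\triangleleft_{\mathcal{G}}$-chain, so the cycle-based contradiction of the base case does not immediately lift. The completion of the argument relies on the planar fact that the two boundary paths of each face meet only at its endpoints, used to forbid directed paths from a vertex of $\sigma_{H_i}$ to a vertex of $\tau_{H_i}$ avoiding $s_{H_i}$ and $t_{H_i}$, and hence to build a directed cycle from the chain's shared edges combined with $P$. The $(\Rightarrow)$ direction is conceptually simpler but still requires careful bookkeeping of the canonical orientations on the dual graph.
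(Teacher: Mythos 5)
Your base case for $(\Leftarrow)$ is correct and is a clean, direct argument: a shared edge of $\tau_{F_1}$ and $\sigma_{F_2}$ really does combine with $P$ into a directed cycle. But the transitive case, which you yourself identify as the main obstacle, rests on a claim that is not justified and does not follow from the facts you invoke. That a face $H_i$ has empty interior and that $\sigma_{H_i}\cap\tau_{H_i}=\{s_{H_i},t_{H_i}\}$ only says that $\partial H_i$ is a Jordan curve whose inside contains no graph; the complement of that inside is connected, so a directed path from a vertex of $\sigma_{H_i}$ to a vertex of $\tau_{H_i}$ can perfectly well travel around the \emph{outside} of the face without meeting $s_{H_i}$ or $t_{H_i}$. (One can build a plane, acyclic, directed graph with exactly this behaviour; what rules it out in a pasting scheme is the global source--sink condition, which your argument never uses.) Even granting the claim, hitting an endpoint does not always produce a cycle: if $Q$ passes through $t_{H_1}$ after leaving $v_0\in\sigma_{H_1}$ you merely get two parallel paths $v_0\rightsquigarrow t_{H_1}$, not a cycle; a cycle only appears if $Q$ hits $s_{H_1}$ or $t_{H_{k-1}}$ specifically. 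And for $k>2$ your path $Q$ joins $\sigma_{H_1}$ to $\tau_{H_{k-1}}$, two \emph{different} faces, so it need not come anywhere near the boundary of the intermediate $H_2,\dots,H_{k-2}$ at all. So the heart of the hard direction is missing.

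The $(\Rightarrow)$ direction has a further gap: the adjacency graph of interior faces (two faces adjacent when they share an edge) need \emph{not} be connected. Two bigons in series, $s\rightrightarrows m\rightrightarrows t$, form a pasting scheme whose two interior faces share only the cut vertex $m$ and no edge, so your induction on the length of a dual path has nothing to induct on. More importantly, even when a dual path $F_1-G_1-\cdots-F_2$ exists, its edges are oriented independently by the ``canonical orientation,'' and turning it into a single directed $\triangleleft_{\mathcal{G}}$-chain from $F_1$ to $F_2$ (or $F_2$ to $F_1$) is exactly the content of the statement; ``split the problem and concatenate directed chains'' does not say how the mixed orientations are resolved. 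For contrast, the paper's proof sidesteps both difficulties by using the pasting-scheme machinery globally: for $(\Rightarrow)$ it peels off, in well-founded order, every face lying $\triangleleft_{\mathcal{G}}^{t}$-below $F_1$ or $F_2$, leaving a subpasting scheme whose top path contains both $\sigma_{F_1}$ and $\sigma_{F_2}$, whence the two faces are $\prec_{\mathcal{G}}$-comparable; for $(\Leftarrow)$ it observes that a common source-to-sink path through $\sigma_{F_1}$ and $\sigma_{F_2}$ makes both faces minimal in the subpasting scheme below it, so no $\triangleleft_{\mathcal{G}}^{t}$-chain can join them. If you want to keep your local, cycle-based strategy, you would need to first prove (using the source and sink, not just planarity) a separation lemma of the form: any directed path entering the region above a face from the region below it must pass through $s_F$ or $t_F$, and then run the induction so that the cycle closes at $s_{H_1}$ or $t_{H_{k-1}}$.
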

\begin{proof}
$\Rightarrow)$ Define $\cG_{F_1}=\{F \ \text{face of} \ \cG\mid F\triangleleft_{\mathcal{G}}^{t}F_1\}$, $\cG_{F_2}=\{F \ \text{face of} \ \cG\mid F\triangleleft_{\mathcal{G}}^{t}F_2\}$ and $\cG_{F_1,F_2}= \cG_{F_1}\cup \cG_{F_2}$. By Remark \ref{properties triangl} we know that $\triangleleft_{\mathcal{G}}$ is both irreflexive and acyclic, hence $\triangleleft_{\mathcal{G}}^{t}$ is still irreflexive. In addition, $F_2\notin \cG_{F_1}$ and $F_1\notin \cG_{F_2}$ by assumption. Therefore, $F_1$ and $F_2$ are not in $\cG_{F_1,F_2}$. Since $\triangleleft_{\mathcal{G}}$ is well founded and $\cG_{F_1,F_2}$ is a subset of the sets of faces of the pasting scheme, there exists a minimal element $H_1$ in $\cG_{F_1,F_2}$. For the same reason, there exists a minimal element $H_2\in \cG_{F_1,F_2}\setminus\{H_1\}$. If we keep removing the $H_i$ for $1\le i\le |\cG_{F_1,F_2}|$ we get a subpasting scheme of $\cG$ that has no faces related to either $F_1$ or $F_2$ in $\triangleleft_{\mathcal{G}}^t$, meaning that $\sigma_{F_1}$ and $\sigma_{F_2}$ are contained in the top path of this subpasting scheme. Therefore $F_1\prec_{\mathcal{G}} F_2$ or $F_2 \prec_{\mathcal{G}} F_1$. 
\newline
$\Leftarrow)$ On the other hand, if $F_1$ and $F_2$ are $\prec_{\mathcal{G}}$-comparable, their domains lie in the same path from source to sink by definition. This is the top path of a subpasting scheme in which $F_1$ and $F_2$ are minimal elements with respect to the relation $\triangleleft_{\mathcal{G}}^t$ restricted to the faces of the subpasting scheme. The elements we removed play no role for $\triangleleft_{\mathcal{G}}^t$ since we cannot find a $H$ sitting between $F_1$ and $F_2$ without contradicting the minimality of one of the two thus $F_1\cancel{\triangleleft_{\mathcal{G}}^{t}}F_2$ and $F_2\cancel{\triangleleft_{\mathcal{G}}^{t}}F_1$. 
\end{proof} 
The relations $\triangleleft_{\mathcal{G}}$ and $\prec_{\mathcal{G}}$ allow us to introduce the categories that we will use in the proof of the pasting theorem. The first one is the following. 
\begin{defn}\label{C_G}
Given a pasting scheme $\cG$, we define $\C_{\cG}$ to be the category with
\begin{enumerate}
    \item \emph{objects:} strings of faces $\overrightarrow{F}=F_1F_2\cdots F_n$ of the pasting scheme corresponding to strict linear extensions of the relation $\triangleleft_{\cG}$; 
    \item \emph{generating morphisms:} for each string of faces $\overrightarrow{F}=F_1F_2\cdots F_n$ and each adjacent pair\footnote{By adjacent we mean that there exists $i\in\{1,\dots,n-1\}$ such that $G=F_i$ and $H=F_{i+1}$.} $G,H$ of faces in $\overrightarrow{F}$ that are not comparable with respect to $\triangleleft_{\cG}^{t}$, a morphism 
    $$\overrightarrow{U}\widehat{GH}\overrightarrow{V}\colon\overrightarrow{F}=\overrightarrow{U}GH\overrightarrow{V}\to\overrightarrow{U}HG\overrightarrow{V}$$
    where $\overrightarrow{U}$ and $\overrightarrow{V}$ are sub-strings of $\overrightarrow{F}$. These morphisms are subject to the relations 
    \begin{enumerate}
        \item $\overrightarrow{U}\widehat{HG}\overrightarrow{V}\circ\overrightarrow{U}\widehat{GH}\overrightarrow{V}=\id$;
        \item $\overrightarrow{U}HG\overrightarrow{V}\widehat{KL}\overrightarrow{W}\circ\overrightarrow{U}\widehat{GH}\overrightarrow{V}KL\overrightarrow{W}=\overrightarrow{U}\widehat{GH}\overrightarrow{V}LK\overrightarrow{W}\circ\overrightarrow{U}GH\overrightarrow{V}\widehat{KL}\overrightarrow{W}$ whenever there is a string of the form $\overrightarrow{U}GH\overrightarrow{V}KL\overrightarrow{W}$ with $\overrightarrow{V}$ possibly equal to the empty string;
        \item $\overrightarrow{U}\widehat{HK}G\overrightarrow{V}\circ\overrightarrow{U}H\widehat{GK}\overrightarrow{V}\circ\overrightarrow{U}\widehat{GH}K\overrightarrow{V}=\overrightarrow{U}K\widehat{GH}\overrightarrow{V}\circ\overrightarrow{U}\widehat{GK}H\overrightarrow{V}\circ\overrightarrow{U}G\widehat{HK}\overrightarrow{V}$ whenever there is a string of the form $\overrightarrow{U}GHK\overrightarrow{V}$. 
    \end{enumerate}
\end{enumerate}
 We will sometimes write $\n\widehat{GH}\n$ instead of $\overrightarrow{U}\widehat{GH}\overrightarrow{V}$ to not overload the notation. The morphisms $\overrightarrow{U}\widehat{GH}\overrightarrow{V}KL\overrightarrow{W}$ and $\overrightarrow{U}\widehat{GH}\overrightarrow{V}LK\overrightarrow{W}$, for example, may be both denoted with $\n\widehat{GH}\n$ so that the placeholder on the right is respectively equal to $\overrightarrow{V}KL\overrightarrow{W}$ or $\overrightarrow{V}LK\overrightarrow{W}$. The same shortcut applies to the other generating morphisms.
\end{defn}
\noindent In $\C_{\cG}$ each generator has an inverse, so every morphism is invertible. In other words $\C_{\cG}$ is a groupoid. However, up to equivalence, it is a very simple groupoid. As a matter of fact, we will prove that the following theorem holds.   
\begin{thrm}
The groupoid $\C_{\cG}$ is contractible, i.e.\ it is equivalent to the terminal category $\bb1$.
\end{thrm}
We will show this result using rewriting (see also \cite{forest2021computational} and \cite{forest2018coherence} for a detailed account of rewriting theory in $\Gray$-categories). In order to be able to apply rewriting techniques we need to choose an orientation for the arrows of $\C_{\cG}$. We will do it by extracting a category $\C_{\cG}'$ which is not a groupoid such that $\C_{\cG}$ is the groupoid reflection of $\C_{\cG}'$, that is the image of $\C_{\cG}'$ under the left adjoint to the inclusion $\Gpd\hookrightarrow\Cat$. In other words, $\C_{\cG}$ is the groupoid obtained by formally inverting every morphism of $\C_{\cG}'$. Using the following lemma, the problem then reduces to proving that $\C_{\cG}'$ has a terminal object.  
\begin{lemma}\label{2-functoriality of grpd refl}
The groupoid reflection of a category $\C$ with a terminal object is contractible.
\end{lemma}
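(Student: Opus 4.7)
The plan is to exhibit an explicit equivalence $L(\C)\simeq \bb1$, where $L\colon\Cat\to\Gpd$ denotes the groupoid reflection (left adjoint to the inclusion $\Gpd\hookrightarrow\Cat$). The crucial input is that a terminal object $t\in\C$ provides not merely a bijection on morphisms into $t$, but a natural transformation from the identity functor to the constant functor at $t$.

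More precisely, I would first observe that for each object $c\in\C$ there is a unique morphism $!_c\colon c\to t$, and by uniqueness any square of the form $!_{c'}\circ f=!_c$ commutes for any $f\colon c\to c'$. Hence the family $(!_c)_{c\in\C}$ assembles into a natural transformation $\eta\colon\id_{\C}\Rightarrow\Delta_t$, where $\Delta_t\colon\C\to\C$ is the constant functor at $t$. Note that $\Delta_t$ factors as $\Delta_t=i\circ {!}$, where $i\colon\bb1\to\C$ picks out $t$ and $!\colon\C\to\bb1$ is the unique functor to the terminal category.

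Next I would invoke the $2$-functoriality of the groupoid reflection: since the adjunction $L\dashv U$ between $\Cat$ and $\Gpd$ is a $2$-adjunction, $L$ carries natural transformations to natural transformations, and in $\Gpd$ every such natural transformation is automatically an isomorphism. Applying $L$ to $\eta$ therefore yields a natural isomorphism
\[
L(\eta)\colon \id_{L(\C)}\;\cong\; L(\Delta_t)\;=\;L(i)\circ L(!).
\]
On the other hand $L$ is the identity on $\bb1$ (already a groupoid), and functoriality gives $L(!)\circ L(i)=L(!\circ i)=L(\id_{\bb1})=\id_{\bb1}$. The pair $(L(!),L(i))$ together with the isomorphism $L(\eta)$ and the identity in the other direction exhibits $L(!)\colon L(\C)\to\bb1$ as an equivalence of categories, so $L(\C)$ is contractible.

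I do not anticipate any serious obstacle here; the only subtle point is the $2$-functoriality of the groupoid reflection, which is immediate from the fact that it is a left $2$-adjoint (equivalently, one can note that localizing at a class of morphisms preserves natural transformations). No properties of $\C$ beyond the existence of a terminal object are used, so the argument will apply verbatim to $\C'_{\cG}$ once the latter has been shown to have one.
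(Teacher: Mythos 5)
Your proof is correct and is essentially the paper's argument: the paper packages it as the $2$-functor $L$ carrying the adjunction $!\dashv i$ induced by the terminal object (whose unit is exactly your $\eta$) to an adjunction in $\Gpd$, which is automatically an equivalence, whereas you unpack the unit explicitly. The one point the paper treats more carefully is the $2$-functoriality of the groupoid reflection itself, which it justifies via closure of $\Gpd$ under cotensors and Kelly's Theorem 4.85 rather than taking it as immediate from being a left adjoint; your parenthetical remark that localization preserves natural transformations is an acceptable direct substitute.
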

\begin{proof}
The category $\C$ has a terminal object if and only if the unique functor $\C\to\bb1$ has a right adjoint $\bb1\to\C$ picking out the terminal object. The functor sending a category to its groupoid reflection can be promoted to a $2$-functor $\Cat\to\Gpd$ because $\Gpd$ is closed under cotensors over $\Cat$ given that the cotensor of a groupoid $\cD$ by a category $\C$ is the category of functors $[\C,\cD]$ which is itself a groupoid since the components of every natural transformation are invertible. Hence the adjunction between the inclusion and the groupoid reflection can be lifted to a $2$-adjunction by Theorem 4.85 of \cite{kelly1982basic}. Being a $2$-functor, the groupoid reflection sends adjunctions in $\Cat$ to adjunctions in $\Gpd$. But every adjunction in $\Gpd$ is an equivalence because its unit and counit are natural isomorphisms since their components are morphisms inside groupoids (and every morphism in a groupoid is invertible). Therefore the adjunction $\C\rightleftarrows\bb1$ we started with is sent to an equivalence between the groupoid reflection of $\C$ and $\bb1$.
\end{proof}
\begin{defn}\label{C_G'}
Define a category $\C_{\cG}'$ in such a way that $\Ob(\C_{\cG}')=\Ob(\C_{\cG})$ and the morphisms are generated by $\n \widehat{KH}\n$ with $H\prec_{\cG} K$, subject to the relations (b) and (c).
\end{defn}
We interpret these morphisms as rewrite rules on the set $\Ob(\C_{\cG})$. First of all, let us prove that the category we have just defined has $\C_{\cG}$ as its groupoid reflection. 
\begin{lemma}\label{grpd refl}
$\C_{\cG}$ is the groupoid reflection of $\C_{\cG}'$.
\end{lemma}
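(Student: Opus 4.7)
The plan is to verify that the natural functor $I\colon\C_{\cG}'\to\C_{\cG}$, which is the identity on objects and sends each generator $\n\widehat{KH}\n$ of $\C_{\cG}'$ (with $H\prec_{\cG} K$) to the generator of the same name in $\C_{\cG}$, is the unit of the groupoid reflection. The functor $I$ is well-defined because the relations (b) and (c) imposed in $\C_{\cG}'$ are literally among those imposed in $\C_{\cG}$, and its target $\C_{\cG}$ is a groupoid since relation (a) provides each generator with an explicit two-sided inverse.

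For the universal property, let $F\colon\C_{\cG}'\to\cD$ be a functor to a groupoid $\cD$. I would construct an extension $F'\colon\C_{\cG}\to\cD$ along $I$ as follows. On objects, $F'$ agrees with $F$. On a generator $\n\widehat{GH}\n$ of $\C_{\cG}$, the earlier proposition identifying $\triangleleft_{\cG}^{t}$-incomparability with $\prec_{\cG}$-comparability guarantees that exactly one of $H\prec_{\cG} G$ or $G\prec_{\cG} H$ holds. In the first case $\n\widehat{GH}\n$ already lies in $\C_{\cG}'$, so I would set $F'(\n\widehat{GH}\n):=F(\n\widehat{GH}\n)$; in the second case $\n\widehat{HG}\n$ lies in $\C_{\cG}'$, and relation (a) together with the constraint $F'I=F$ forces $F'(\n\widehat{GH}\n):=F(\n\widehat{HG}\n)^{-1}$, which exists since $\cD$ is a groupoid. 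This prescription is forced, giving uniqueness for free once well-definedness is checked, and it manifestly respects relation (a).

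It remains to show that $F'$ sends relations (b) and (c) of $\C_{\cG}$ to identities in $\cD$. The strategy is to rewrite each such relation by substituting $\n\widehat{GH}\n=\n\widehat{HG}\n^{-1}$ for every factor whose $\prec_{\cG}$-order has been reversed, then to rearrange the resulting inverses using that $\cD$ is a groupoid; what remains is an instance of the same relation (b) or (c) in $\C_{\cG}'$, which holds because $F$ is a functor. The main obstacle I anticipate is the case analysis for the hexagonal braid-type relation (c), where the three pairwise $\prec_{\cG}$-comparisons among $G$, $H$, $K$ produce several sub-cases; each is handled by the same algebraic manipulation, but a careful enumeration is needed to confirm that no configuration is missed.
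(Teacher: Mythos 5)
Your overall strategy is the same as the paper's: define the identity-on-objects functor $i\colon\C_{\cG}'\to\C_{\cG}$, observe via the proposition relating $\triangleleft_{\cG}^{t}$-incomparability to $\prec_{\cG}$-comparability that every generator of $\C_{\cG}$ is either a generator of $\C_{\cG}'$ or the formal inverse of one, extend a functor $F\colon\C_{\cG}'\to\cD$ by sending the latter to $F(\n\widehat{HG}\n)^{-1}$, and get uniqueness for free since the extension is forced on generators. Up to that point the proposal is correct and matches the paper.

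The issue is that everything after ``It remains to show that $F'$ sends relations (b) and (c) of $\C_{\cG}$ to identities'' is a statement of intent rather than a proof, and this verification is essentially the entire content of the paper's argument. Your claim that after substituting $\n\widehat{GH}\n=\n\widehat{HG}\n^{-1}$ and rearranging ``what remains is an instance of the same relation (b) or (c) in $\C_{\cG}'$'' glosses over the real point: in the mixed cases the relation you land on is \emph{not} the same instance with the same letters, but a permuted instance based at a \emph{different} string. For example, if $\n\widehat{GH}\n$ lies in $\C_{\cG}'$ but $\n\widehat{KL}\n$ does not, commuting $F(\n\widehat{LK}\n)^{-1}$ past $F(\n\widehat{GH}\n)$ requires the instance of (b) relating $\n\widehat{GH}\n$ and $\n\widehat{LK}\n$, which is based at the object $\overrightarrow{U}GH\overrightarrow{V}LK\overrightarrow{W}$; one must check that this string is actually an object of $\C_{\cG}'$ (a strict linear extension of $\triangleleft_{\cG}$) and that both generators there satisfy the $\prec_{\cG}$-orientation required to be morphisms of $\C_{\cG}'$. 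For the hexagon (c) the situation is worse: the paper needs four cases (all three, two, one, or none of $\n\widehat{HK}\n,\n\widehat{GK}\n,\n\widehat{GH}\n$ in $\C_{\cG}'$), uses composability to rule out certain exclusion patterns (the middle generator $\n\widehat{GK}\n$ cannot be the unique excluded one, and must be among the excluded ones when two are excluded), and in each surviving case composes with carefully chosen inverses so as to expose a hexagon of $\C_{\cG}'$ based at a permutation of $GHK$. None of this is automatic from ``rearrange the inverses in a groupoid,'' and your own closing sentence concedes the enumeration has not been done. So the proposal is the right approach but stops exactly where the proof begins; to complete it you need to carry out the case analysis and, in each case, exhibit the specific relation of $\C_{\cG}'$ being invoked together with the check that its source string is a legitimate object.
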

\begin{proof}
First of all, notice that $\C_{\cG}'[\text{Mor}(\C_{\cG}')^{-1}]=\C_{\cG}'[\text{GenMor}(\C_{\cG}')^{-1}]$, where by $\text{GenMor}(\C_{\cG}')$ we mean the set of generating morphisms. In fact, every morphism in $\C_{\cG}'$ can be written as composition of some generating morphisms and so its inverse is just a composition of the inverses of the generators.
Let us show now that $\C_{\cG}$ has the universal property of the localization with respect to the set of generating morphisms of $\C_{\cG}'$, i.e.\ for every groupoid $\D$ and every functor $F\colon\C_{\cG}'\to\D$ there exists a unique factorization 
\[\begin{tikzcd}
	{\C_{\cG}'} & \D \\
	{\C_{\cG}}
	\arrow["F", from=1-1, to=1-2]
	\arrow["i"', from=1-1, to=2-1]
	\arrow["{\overline{F}}"', dashed, from=2-1, to=1-2]
\end{tikzcd}\]
where $i\colon\C_{\cG}'\to\C_{\cG}$ is the identity on objects and morphisms ($i$ is indeed a functor because every relation in $\C_{\cG}'$ is also a relation in $\C_{\cG}$). We can extend every such functor $F$ to $\overline{F}$ by defining it as $F$ on the objects of $\C_{\cG}$ and as
\[\overline{F}(f) = \begin{cases}
  F(f) & f \text{ is in the image of $i$} \\
  F(f^{-1})^{-1} & f \text{ is not in the image of $i$}
\end{cases}
\]
on the generating morphisms of $\C_{\cG}$, which can be done uniquely since every generating morphism of $\C_{\cG}$ is either a generating morphism of $\C_{\cG}'$ or a inverse to one of the generators. Let us show the compatibility of $\overline{F}$ with the equations of $\C_{\cG}$. We will prove that the relations (b) and (c) in $\C_{\cG}$ can be obtained from the ones in $\C_{\cG}'$ by suitably composing with the inverses of the generating morphisms of $\C_{\cG}'$ (which in turn are generating morphisms in $\C_{\cG}$). Since $\overline{F}$ is defined in terms of $F$, it will then preserve these extra relations. Once we have shown that $\overline{F}$ sends the equations of $\C_{\cG}$ to equalities, we get that $\overline{F}$ can be lifted to another functor having $\C_{\cG}$ as domain that we will still call $\overline{F}$ with an abuse of notation. For the relation (b) we have these cases:
\begin{enumerate}[(i)]
    \item $\n \widehat{GH}\n$ and $\n\widehat{KL}\n$ are morphisms in $\C_{\cG}'$, in which case (b) holds already in $\C_{\cG}'$ hence even more so in $\C_{\cG}$;
    \item neither $\n\widehat{GH}\n$ nor  $\n\widehat{KL}\n$ are morphisms in $\C_{\cG}'$, in which case (b) can be obtained by inverting the relation $\n\widehat{LK}\n\circ\n\widehat{HG}\n=\n\widehat{HG}\n\circ\n\widehat{LK}\n$ that holds in $\C_{\cG}'$ remembering that $\n\widehat{GH}\n=\n\widehat{HG}\n^{-1}$ and $\n\widehat{KL}\n=\n\widehat{LK}\n^{-1}$ 
 in $\C_{\cG}$;
 \item exactly one between $\n\widehat{GH}\n$ and $\n\widehat{KL}\n$ is in $\C_{\cG}'$, e.g.\ $\n\widehat{GH}\n$ (so that $\n\widehat{LK}\n$ is in $\C_{\cG}'$), in which case we have that $H\prec_{\cG}G\prec_{\cG}K\prec_{\cG}L$. We can then get (b) as
 \begin{align*}
     \n HG\n\widehat{KL}\n \circ \n\widehat{GH}\n KL\n &= \n HG\n\widehat{KL}\n \circ \n\widehat{GH}\n KL\n\circ \n GH\n\widehat{LK}\n\circ\n GH\n\widehat{KL}\n \\
     &= \n HG\n\widehat{KL}\n\circ \n HG\n\widehat{LK}\n\circ\n\widehat{GH}\n LK\n\circ \n GH\n\widehat{KL}\n\\
     &= \n\widehat{GH}\n LK\n\circ \n GH\n\widehat{KL}\n
 \end{align*}
  using that $\n\widehat{GH}\n KL\n\circ \n GH\n\widehat{LK}\n=\n HG\n\widehat{LK}\n\circ\n\widehat{GH}\n LK\n$ holds in $\C_{\cG}'$ plus relation (a). Similarly one obtains (b) if $\n\widehat{KL}\n$ is in $\C_{\cG}'$ while $\n\widehat{GH}\n$ is not.
\end{enumerate}
For the relation (c) we have these cases:
\begin{enumerate}[(i)]
    \item $\n\widehat{HK}\n, \n\widehat{GK}\n$ and $\n\widehat{GH}\n$ are all in $\C_{\cG}'$, in which case (c) holds already in $\C_{\cG}'$ hence also in $\C_{\cG}$;
    \item none of the morphisms $\n\widehat{HK}\n, \n\widehat{GK}\n$ and $\n\widehat{GH}\n$ is in $\C_{\cG}'$, in which case we can obtain (c) by inverting the corresponding relation involving the inverses of the morphisms which holds in $\C_{\cG}'$;
    \item two out of three of the morphisms $\n\widehat{HK}\n, \n\widehat{GK}\n$ and $\n\widehat{GH}\n$ are in $\C_{\cG}'$. For composability reasons, the excluded morphism cannot be $\n\widehat{GK}\n$. Suppose that the excluded morphism is $\n\widehat{HK}\n$, so that $\n\widehat{KH}\n$ is in $\C_{\cG}'$. We then have $H\prec_{\cG}K\prec_{\cG}G$ so that we can obtain (c) as
\begin{align*}
    \n\widehat{HK}G\n\circ \n H\widehat{GK}\n\circ\n\widehat{GH}K\n &= \n\widehat{HK}G\n\circ\n H\widehat{GK}\n\circ\n\widehat{GH}K\n\circ \n G\widehat{KH}\n\circ \n G\widehat{HK}\n \\
    &= \n\widehat{HK}G\n\circ\n\widehat{KH}G\n\circ \n K\widehat{GH}\n\circ\n\widehat{GK}H\n\circ \n G\widehat{HK}\n \\
    &= \n K\widehat{GH}\n \circ\n\widehat{GK} H\n\circ\n G\widehat{HK}\n
\end{align*}    
where we used that $\n H\widehat{GK}\n\circ\n\widehat{GH}K\n\circ\n G\widehat{KH}\n=\n\widehat{KH}G\n\circ \n K\widehat{GH}\n\circ\n\widehat{GK}H\n$ in $\C_{\cG}'$ and relation (a). Similarly one obtains (c) if the excluded morphism is $\n\widehat{GH}\n$.
\item one out of three of the morphisms $\n\widehat{HK}\n, \n\widehat{GK}\n$ and $\n\widehat{GH}\n$ is in $\C_{\cG}'$. For composability reasons, one of the two excluded morphisms must be $\n\widehat{GK}\n$. Suppose that the other excluded morphism is $\n\widehat{GH}\n$ so that we have $G\prec_{\cG} K\prec_{\cG} H$ which gives (c) as
\begin{align*}
    \n\widehat{HK}G\n\circ \n H\widehat{GK}\n\circ\n \widehat{GH}K\n &= \n K\widehat{GH}\n\circ\n\widehat{GK}H\n\circ\n\widehat{KG}H\n\circ \n K\widehat{HG}\n\circ\n\widehat{HK}G\n\circ\n H\widehat{GK}\n\circ\n\widehat{GH}K\n \\
    &= \n K\widehat{GH}\n\circ\n\widehat{GK}H\n\circ \n G\widehat{HK}\n\circ\n\widehat{HG} K\n\circ\n H\widehat{KG}\n\circ\n H\widehat{GK}\n\circ\n\widehat{GH}K \n\\
    &= \n K\widehat{GH}\n\circ\n\widehat{GK}H\n\circ \n G\widehat{HK}\n
\end{align*}
where we used that $\n \widehat{KG}H\n\circ\n K\widehat{HG}\n\circ\n\widehat{HK}G \n= \n G\widehat{HK}\n\circ\n\widehat{HG}K\n\circ\n H\widehat{KG}\n$ in $\C_{\cG}'$ and relation (a). Similarly one obtains (c) if the other excluded morphism is $\n\widehat{HK}\n$.
\end{enumerate}
This concludes the proof that $\C_{\cG}$ is the groupoid reflection of $\C_{\cG}'$.
\end{proof}
Therefore we just have to prove that $\C_{\cG}'$ contains a terminal object.
\begin{prop}\label{C_G' term obj}
$\C_{\cG}'$ has a terminal object.
\end{prop}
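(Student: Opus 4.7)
The plan is to view $\C_{\cG}'$ as a rewriting system on the set of linear extensions of $\triangleleft_{\cG}$, whose one-step rewrites are the generators $\n\widehat{KH}\n$ (swaps of adjacent $\triangleleft_{\cG}^{t}$-incomparable faces $K,H$ with $H\prec_{\cG}K$, i.e.\ swaps that move the $\prec_{\cG}$-smaller face to the left), and to exhibit the terminal object as the unique normal form via Newman's lemma.

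\emph{Termination.} For $\overrightarrow{F}=F_1\cdots F_n$ introduce the $\prec_{\cG}$-inversion count
$$\nu(\overrightarrow{F})\;:=\;\#\{(i,j)\mid 1\le i<j\le n,\ F_j\prec_{\cG}F_i\}.$$
A one-step rewrite changes the string only in one adjacent pair $(F_i,F_{i+1})=(K,H)$ with $H\prec_{\cG}K$, removing the inversion $(i,i+1)$ and leaving all other pairs untouched. Hence $\nu$ strictly decreases, giving termination by the standard criterion recalled in Section 3.

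\emph{Local confluence.} Two distinct rewrites at positions $i<j$ are either \emph{disjoint} (when $j\ge i+2$, so four distinct faces are involved), in which case relation (b) of Definition \ref{C_G} supplies the closing square, or they \emph{overlap} (when $j=i+1$, so three consecutive faces are involved), in which case relation (c) supplies the closing hexagon. Hence the system is locally confluent, and by Newman's lemma it is globally confluent.

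\emph{Unique normal form.} A normal form is a linear extension of $\triangleleft_{\cG}$ in which no adjacent pair is a $\prec_{\cG}$-inversion; equivalently, every adjacent pair $(F_i,F_{i+1})$ satisfies $F_i\triangleleft_{\cG}^{t}F_{i+1}$ or $F_i\prec_{\cG}F_{i+1}$. Acyclicity of $\cG$ rules out a simultaneous $F\triangleleft_{\cG}^{t}F'$ and $F'\prec_{\cG}F$ (such a clash would close up into a directed cycle through the common boundary), so by the previous proposition the combined relation $\triangleleft_{\cG}^{t}\cup\prec_{\cG}$ is a total strict partial order, and hence already a strict linear order. There is therefore a unique object $T$ enjoying the normal form property, and every reduction terminates at $T$.

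\emph{Universal property.} For any object $X$, termination yields a reduction $X\xrightarrow{*}T$, and hence at least one morphism $X\to T$ in $\C_{\cG}'$. Uniqueness follows from the coherent form of Newman's lemma: any two parallel reduction sequences from $X$ to $T$ can be tessellated, as illustrated after Newman's lemma in Section 3, by the commutative squares of (b) and commutative hexagons of (c), so they represent the same morphism in $\C_{\cG}'$. Therefore $T$ is terminal.

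The main obstacle I foresee is the case analysis underlying local confluence, namely verifying that relations (b) and (c) really exhaust the overlaps between two one-step rewrites (in particular that three-face overlaps are always captured by the hexagon (c) rather than falling outside it), together with the geometric check that $\triangleleft_{\cG}^{t}$ and $\prec_{\cG}$ cannot conflict inside a pasting scheme.
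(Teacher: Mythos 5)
Your proposal follows essentially the same route as the paper: termination via the $\prec_{\cG}$-inversion count, local confluence split into the disjoint case (closed by relation (b)) and the three-face overlap case (closed by relation (c)), and then Newman's lemma together with the observation that the closing squares and hexagons are genuinely commutative in $\C_{\cG}'$, so that the normal form is a terminal object. Your additional identification of the normal form as the maximally ordered string appears in the paper only as a remark after the proof, and the case analysis you flag as the main remaining work is exactly the content of the paper's local-confluence argument.
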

\begin{proof}
The proof uses Newman's lemma. 
\newline
\emph{Termination:} Let $X_{\overrightarrow{F}}=\{(F_i,F_j) \mid i<j \ \text{and} \ F_j\prec_{\cG} F_i \}$, where $\overrightarrow{F}=F_1\cdots F_n\in\Ob(\C_{\cG}')$. Define the function
\begin{alignat*}{4}
\rho\colon\; && \Ob(\C_{\cG}') && \;\to\;     & \N \\
     && F_1\cdots F_n    && \;\mapsto\; & |X_{\overrightarrow{F}}|
\end{alignat*}
which is reduced by $1$ by any rewrite $\widehat{F_iF_{i+1}}$. In fact, for $\overrightarrow{F'}=F_1\cdots F_{i+1}F_i\cdots F_n$ we have $$X_{\overrightarrow{F}}=X_{\overrightarrow{F'}}\cup\{(F_i,F_{i+1})\}$$ since the only pair of faces that have changed their positions relative to each other is $(F_i,F_{i+1})$. This implies that the rewriting system is terminating because reductions reduce the measure, which is bounded below by $0$.\newline
\emph{Local confluence:} We have to show that every fork (sometimes called \emph{local branching}) can be closed. Recall that if we can apply a rewrite $\widehat{F_iF_j}$ then we know that $F_j\prec_{\cG}F_i$. Given a fork of the kind
\[\begin{tikzcd}
	& {\overrightarrow{F}} \\
	{\overrightarrow{F}'} && {\overrightarrow{F}''}
	\arrow["{\widehat{F_iF_j}}"', from=1-2, to=2-1]
	\arrow["{\widehat{F_hF_k}}", from=1-2, to=2-3]
\end{tikzcd}\]
we consider the sets of faces $\{G,H\}$ and $\{K,L\}$ appearing in its domain, which implies that the elements of each set are $\prec_{\cG}$-comparable. Suppose without loss of generality that $H$ appears before $G$ in the source of the fork, that $L$ appears before $K$, and that $\{G,H\}$ appears before or at the same point as $\{K,L\}$. If $\{G,H\}=\{K,L\}$ there is nothing to prove because there is only one possible swap so no branchings occur. If $\{G,H\}\cap\{K,L\}=\emptyset$ we have a span 
\[\begin{tikzcd}
	& \overrightarrow{U}HG\overrightarrow{V}LK\overrightarrow{W} \\
	\overrightarrow{U}GH\overrightarrow{V}LK\overrightarrow{W} && \overrightarrow{U}HG\overrightarrow{V}KL\overrightarrow{W} 
	\arrow["{\n\widehat{HG}\n}"', from=1-2, to=2-1]
	\arrow["{\n\widehat{LK}\n}", from=1-2, to=2-3]
\end{tikzcd}\]
in the category $\C_{\cG}'$, that can be closed in the following way
\[\begin{tikzcd}
	& \overrightarrow{U}HG\overrightarrow{V}LK\overrightarrow{W} \\
	\overrightarrow{U}GH\overrightarrow{V}LK\overrightarrow{W} && \overrightarrow{U}HG\overrightarrow{V}KL\overrightarrow{W} \\
	& \overrightarrow{U}GH\overrightarrow{V}KL\overrightarrow{W}
	\arrow["{\n\widehat{HG}\n}"', from=1-2, to=2-1]
	\arrow["{\n\widehat{LK}\n}", from=1-2, to=2-3]
	\arrow["{\n\widehat{LK}\n}"', from=2-1, to=3-2]
	\arrow["{\n\widehat{HG}\n}", from=2-3, to=3-2]
\end{tikzcd}\]
because of the relation (b). 
When $\{G,H\}\cap\{K,L\}\ne\emptyset$, we have that $G = L$ and $K\prec_{\cG}G\prec_{\cG}H$ hence there is a span 
\[\begin{tikzcd}
	& \overrightarrow{U}HGK\overrightarrow{V} \\
	\overrightarrow{U}GHK\overrightarrow{V} && \overrightarrow{U}HKG\overrightarrow{V}
	\arrow["{\n\widehat{HG}\n}"', from=1-2, to=2-1]
	\arrow["{\n\widehat{GK}\n}", from=1-2, to=2-3]
\end{tikzcd}\]
in the category $\C_{\cG}'$, that can be closed in the following way
\[\begin{tikzcd}
	& \overrightarrow{U}HGK\overrightarrow{V} \\
	\overrightarrow{U}GHK\overrightarrow{V} && \overrightarrow{U}HKG\overrightarrow{V} \\
	\overrightarrow{U}GKH\overrightarrow{V} && \overrightarrow{U}KHG\overrightarrow{V}  \\
	&  \overrightarrow{U}KGH\overrightarrow{V} 
	\arrow["{\n\widehat{HG}\n}"', from=1-2, to=2-1]
	\arrow["{\n\widehat{GK}\n}", from=1-2, to=2-3]
	\arrow["{\n\widehat{HK}\n}"', from=2-1, to=3-1]
	\arrow["{\n\widehat{GK}\n}"', from=3-1, to=4-2]
	\arrow["{\n\widehat{HK}\n}", from=2-3, to=3-3]
	\arrow["{\n\widehat{HG}\n}", from=3-3, to=4-2]
\end{tikzcd}\]
thanks to the relation (c). Therefore the rewriting system is locally confluent. \newline
Uniqueness of the path from the top to the bottom of the diamond from Newman's lemma comes from the fact that every local branching can be closed \emph{plus} the observation that all the sub-diamonds in the application of Newman's lemma are actually \emph{commutative} diagrams so that at the end all the sub-paths are equal. This is an extra condition which doesn't come from Newman's lemma but from the equations that the generating morphisms in the category $\C_{\cG}'$ have to satisfy. Therefore the minimal element for the rewriting system is a terminal object in $\C_{\cG}'$, which is what we wanted to show.
\end{proof}

\begin{rmk}
It is well known that in a category the terminal element, if it exists, is unique up to a unique isomorphism. In $\C_{\cG}'$ this is even stronger: the terminal element is unique and it coincides with the maximally ordered string of faces, namely the one that doesn't contain adjacent pairs of faces $F, G$ with $F$ coming before $G$ in the string and $G\prec_{\cG}F$. To show uniqueness, suppose there exist two different terminal objects $m$ and $m'$. Then we have a span 
\[\begin{tikzcd}
	& \bullet \\
	m && {m'}
	\arrow[from=1-2, to=2-1]
	\arrow[from=1-2, to=2-3]
\end{tikzcd}\]
that can be closed by Newman's lemma to a commutative diagram 
\[\begin{tikzcd}
	& \bullet \\
	m && {m'} \\
	& n
	\arrow[from=1-2, to=2-1]
	\arrow[from=1-2, to=2-3]
	\arrow[from=2-1, to=3-2]
	\arrow[from=2-3, to=3-2]
\end{tikzcd}\]
because the rewriting system is confluent. But $m$ and $m'$ are minimal, therefore $m=n=m'$.
\end{rmk}
A pasting scheme can be seen as a ``free-living pasting diagram'', meaning that it provides the shape of the pasting diagram which can be then interpreted inside some higher category through a labelling in the following precise sense. \begin{defn}
A \emph{labelling of a pasting scheme} $\cG$ in a $\Gray$-category $\K$ is an assignment of a $0$-cell to each vertex, a $1$-cell to each edge and a $2$-cell to each face of $\cG$ in a way that preserves domains and codomains. 
\end{defn}
In other words,
\begin{enumerate}
    \item we label each vertex $u$ of $\cG$ with an object $l(u)$ of $\cK$,
    \item we label each edge $e$ with a $1$-cell $l(e)$ of $\cK$ so that $\dom_0(l(e))=l(\text{source}(e))$ and $\cod_0(l(e))=l(\text{target}(e))$,
    \item given a directed path $p=e_1e_2\dots e_n$ we define $l(p)\coloneqq l(e_n)\circ l(e_{n-1})\circ\dots l(e_2)\circ l(e_1)$ which is uniquely defined because horizontal composition of $1$-cells is associative,
    \item we label every face $F$ with a $2$-cell $l(F)$ such that $\dom_1(l(F))=l(\sigma_F)$ and $\cod_1(l(F))=l(\tau_F)$.
\end{enumerate}
Sometimes we call \emph{pasting diagram} the image of this assignment. To be precise, this is what we refer to as \emph{$2$-dimensional pasting diagram} in the main theorem. The bidimensionality comes from the planarity of the pasting scheme.

The relations $\triangleleft_{\cG}$ and $\prec_{\mathcal{G}}$ are a way to capture two different features of the process that returns a composite of a given pasting diagram, which is a vertical composition of whiskered $2$-cells. The relation $\triangleleft_{\mathcal{G}}$ specifies the order in which the $2$-cells appear in this vertical composition: if $F_1\triangleleft_{\mathcal{G}}F_2$ then any composite of the corresponding pasting diagram will have a term of the form $l(F_2)\cdot l(F_1)$ - or a suitable whiskering of it - in this precise order. The relation $\prec_{\mathcal{G}}$ has instead to do with the application of the middle four interchange law to a pair of $2$-cells whose preimages via the labelling are $\prec_{\cG}$-compatible.

Given a pasting scheme $\cG$, the objects of the category $\C_{\cG}$ and the composites of the corresponding pasting diagram are related in a way such that the axioms chosen for $\C_{\cG}$ are compatible with the ones of $\Gray$-categories. This is the content of the following lemma.
\begin{lemma}\label{compatibility} 
Given a labelling into a $\Gray$-category $\cK$ of a pasting scheme $\cG$, there is a canonical functor from $\C_{\cG}$ to $\K(S,T)(p,q)$ where $S, T, p, q$ are the images of the source, sink, top and bottom path of $\cG$ respectively. 
\end{lemma}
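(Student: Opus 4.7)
The plan is to specify the functor $\Phi\colon \C_{\cG}\to \K(S,T)(p,q)$ explicitly, defining it first on objects, then on generating morphisms, and finally verifying that it respects the relations (a), (b), (c) from Definition \ref{C_G}.

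On objects, a string $\overrightarrow{F}=F_1\cdots F_n$ is by definition a strict linear extension of $\triangleleft_{\cG}$, which guarantees that the peeling algorithm sketched in the introduction terminates successfully in this particular order: at each stage $F_i$ is a minimal face of the remaining pasting subscheme, so $l(F_i)$ can be whiskered into the current source path and the resulting whiskered $2$-cells compose vertically. I set $\Phi(\overrightarrow{F})$ to be this vertical composite, a $2$-cell $p\Rightarrow q$ in $\K(S,T)$.

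On a generating morphism $\overrightarrow{U}\widehat{GH}\overrightarrow{V}$, the incomparability of $G$ and $H$ with respect to $\triangleleft_{\cG}^t$ forces, by the proposition relating $\triangleleft_{\cG}^t$ and $\prec_{\cG}$, that $G$ and $H$ are $\prec_{\cG}$-comparable, so $\sigma_G,\tau_G,\sigma_H,\tau_H$ all lie on a common directed path from $s$ to $t$. After whiskering, the $2$-cells $l(G)$ and $l(H)$ sit in a horizontal composite of the form $(\alpha,g)$ and $(f,\beta)$ at distinct coordinates, and the corresponding Gray cell $\gamma_{\alpha,\beta}$, invertible since we work in the pseudo version, provides a $3$-cell interchanging $(f',\beta)(\alpha,g)$ with $(\alpha,g')(f,\beta)$. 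Whiskering this Gray cell by the images of $\overrightarrow{U}$ above and $\overrightarrow{V}$ below yields a $3$-cell from $\Phi(\overrightarrow{U}GH\overrightarrow{V})$ to $\Phi(\overrightarrow{U}HG\overrightarrow{V})$, which I declare to be $\Phi(\overrightarrow{U}\widehat{GH}\overrightarrow{V})$.

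It remains to check that $\Phi$ respects the defining relations. Relation (a) is immediate from the pseudo Gray-cell axioms: $\gamma_{\alpha,\beta}^{-1}\circ\gamma_{\alpha,\beta}=\id$. Relation (b) corresponds to two independent Gray cells separated by a substring $\overrightarrow{V}$; since $\K(S,T)$ is a strict $2$-category, vertical composition of its $2$-cells strictly interchanges with horizontal composition, so the two orders of application yield the same composite $3$-cell. Relation (c) is the subtler case: it corresponds to three pairwise $\prec_{\cG}$-comparable faces $G,H,K$ lying in a common horizontal composite, and the required equality of the two ternary interchanges amounts to the hexagonal Gray coherence axiom displayed in the definition of $\Gray$-category (together with its horizontal analogue), whiskered by the surrounding $\overrightarrow{U}$ and $\overrightarrow{V}$.

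The main obstacle is relation (c): one must identify the three Gray cells arising from the three binary swaps with the appropriate instances of $\gamma$, carefully track how the ambient $1$-cells change under each swap, and recognise the resulting equation as an instance of the coherence axiom for $\gamma_{\alpha'\alpha,\beta}$ together with its horizontal dual. Relations (a) and (b), by contrast, are essentially formal consequences of invertibility of Gray cells and strict $2$-categorical interchange in $\K(S,T)$.
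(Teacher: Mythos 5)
Your construction of the functor and your treatment of relations (a) and (b) match the paper's proof: the same vertical composite of whiskered $2$-cells on objects, the same whiskered Gray cells on generators, invertibility of Gray cells for (a), and strict middle-four interchange in the hom-$2$-category $\cK(S,T)$ for (b). The genuine problem is your verification of relation (c), where you invoke the wrong axiom. Relation (c) is the Yang--Baxter (permutohedron) identity for three pairwise-swappable faces $G,H,K$ whose labels $\alpha,\beta,\gamma$ occupy three \emph{disjoint} positions along a common path from $s$ to $t$; what it requires is the \emph{cube identity} equating the two hexagonal composites of the three Gray cells $\gamma_{\beta,\alpha}$, $\gamma_{\gamma,\beta}$, $\gamma_{\gamma,\alpha}$ around a cube. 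The axiom you name, the one for $\gamma_{\alpha'\alpha,\beta}$ and its horizontal dual, governs Gray cells whose argument is a \emph{composite of two cells in the same hom-position}, and it does not by itself yield the cube: it would let you decompose $\gamma_{\gamma,\,\beta a_1\circ b_0\alpha}$ and $\gamma_{\gamma,\,b_1\alpha\circ\beta a_0}$ each into a pair of Gray cells, but identifying the two resulting decompositions is precisely the content of the cube identity, and that follows from the \emph{naturality} axiom for Gray cells (the first displayed relation in the definition, involving $(\Phi,g)$ and $(\Phi,g')$) instantiated with $\Phi$ itself a Gray cell, e.g.\ $\Phi=\gamma_{\beta,\alpha}$. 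This is exactly how the paper closes case (c) (``a whiskered version of the cube identity \dots\ a consequence of naturality applied to Gray cells''). As written, your step for (c) would not go through; replacing the composition axiom by naturality of $\gamma$ applied to a Gray cell repairs it without changing anything else in your argument.
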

\begin{proof} 
 The functor $\C_{\cG}\to\K(S,T)(p,q)$ is defined by sending a string of faces $\overrightarrow{F}=F_1\cdots F_n$ to the $2$-cell $\alpha$ obtained by composing the $\alpha_i=l(F_i)$ in the same order\footnote{Notice that composing $\alpha_i$ before $\alpha_j$ for $i<j$ means that they appear in the \emph{reverse} order in the expression of $\alpha$ as a vertical composite of whiskerings of these $2$-cells.}. If we have a pasting scheme of the form   
 \[\begin{tikzcd}
	{s} & {v} & {t}
	\arrow[""{name=0, anchor=center, inner sep=0}, "{e_1'}"', bend right=45, from=1-1, to=1-2]
	\arrow[""{name=1, anchor=center, inner sep=0}, "{e_2}", bend left=45, from=1-2, to=1-3]
	\arrow[""{name=2, anchor=center, inner sep=0}, "{e_1}", bend left=45, from=1-1, to=1-2]
	\arrow[""{name=3, anchor=center, inner sep=0}, "{e'_2}"', bend right=45, from=1-2, to=1-3]
	\arrow["F"{description}, draw=none, from=2, to=0]
	\arrow["G"{description}, draw=none, from=1, to=3]
\end{tikzcd}\]
 labelled as follows (notice that in this case $p=fg$ and $q=f'g'$)
 \[\begin{tikzcd}
	S & A & T
	\arrow[""{name=0, anchor=center, inner sep=0}, "{g'}"', bend right=45, from=1-1, to=1-2]
	\arrow[""{name=1, anchor=center, inner sep=0}, "f", bend left=45, from=1-2, to=1-3]
	\arrow[""{name=2, anchor=center, inner sep=0}, "g", bend left=45, from=1-1, to=1-2]
	\arrow[""{name=3, anchor=center, inner sep=0}, "{f'}"',  bend right=45, from=1-2, to=1-3]
	\arrow["\psi", shorten <=6pt, shorten >=6pt, Rightarrow, from=2, to=0]
	\arrow["\varphi", shorten <=6pt, shorten >=6pt, Rightarrow, from=1, to=3]
\end{tikzcd}\]
 we send the generating morphism $\widehat{FG}$ to the image of the Gray cell
 \[\begin{tikzcd}
	{(f,g)} & {(f,g')} \\
	{(f',g)} & {(f',g')}
	\arrow["{(f,\psi)}", from=1-1, to=1-2]
	\arrow["{(\varphi, g')}", from=1-2, to=2-2]
	\arrow["{(\varphi, g)}"', from=1-1, to=2-1]
	\arrow["{(f',\psi)}"', from=2-1, to=2-2]
	\arrow["{\gamma_{\varphi,\psi}}"', shorten <=12pt, shorten >=12pt, Rightarrow, from=1-2, to=2-1]
\end{tikzcd}\]
through the composition $2$-functor $c_{S,A,T}\colon\cK(A,T)\otimes\cK(S,A)\to\cK(S,T)$. For a general pasting scheme we send a generating morphism $$\overrightarrow{U}\widehat{GH}\overrightarrow{V}\colon\overrightarrow{F}=\overrightarrow{U}GH\overrightarrow{V}\to\overrightarrow{U}HG\overrightarrow{V}$$ to a suitable whiskering of the Gray cell $\gamma_{l(H),l(G)}$ with the other $2$-cells that appear in the labelling of the pasting scheme. 
It remains to show that this assignment preserves the relations that hold in $\C_{\cG}$, i.e.\ that it is functorial. Relation (a) is preserved because the Gray cells are invertible. Relation (b) is preserved because for a pasting scheme labelled as follows
\[\begin{tikzcd}
	&& A & B & C & D \\
	S &&&&&&& T \\
	&& E & F & G & H
	\arrow["f", from=2-1, to=1-3]
	\arrow["k", from=2-1, to=3-3]
	\arrow[""{name=0, anchor=center, inner sep=0}, "a_0", bend left=45, from=1-3, to=1-4]
	\arrow[""{name=1, anchor=center, inner sep=0}, "a_1"', bend right=45, from=1-3, to=1-4]
	\arrow[""{name=2, anchor=center, inner sep=0}, "g", from=1-4, to=1-5]
	\arrow[""{name=3, anchor=center, inner sep=0}, "b_0", bend left=45, from=1-5, to=1-6]
	\arrow[""{name=4, anchor=center, inner sep=0}, "b_1"', bend right=45, from=1-5, to=1-6]
	\arrow["h", from=1-6, to=2-8]
	\arrow[""{name=5, anchor=center, inner sep=0}, "c_0", bend left=45, from=3-3, to=3-4]
	\arrow[""{name=6, anchor=center, inner sep=0}, "c_1"', bend right=45, from=3-3, to=3-4]
	\arrow[""{name=7, anchor=center, inner sep=0}, "l", from=3-4, to=3-5]
	\arrow[""{name=8, anchor=center, inner sep=0}, "d_0", bend left=45, from=3-5, to=3-6]
	\arrow[""{name=9, anchor=center, inner sep=0}, "d_1"', bend right=45, from=3-5, to=3-6]
	\arrow["m", from=3-6, to=2-8]
	\arrow[""{name=10, anchor=center, inner sep=0}, "p", bend left=75, from=2-1, to=2-8]
	\arrow[""{name=11, anchor=center, inner sep=0}, "q"', bend right=75, from=2-1, to=2-8]
	\arrow["\alpha", shorten <=6pt, shorten >=6pt, Rightarrow, from=0, to=1]
	\arrow["\beta", shorten <=6pt, shorten >=6pt, Rightarrow, from=3, to=4]
	\arrow["\gamma", shorten <=6pt, shorten >=6pt, Rightarrow, from=5, to=6]
	\arrow["\delta", shorten <=6pt, shorten >=6pt, Rightarrow, from=8, to=9]
	\arrow["\rho", shorten <=27pt, shorten >=27pt, Rightarrow, from=2, to=7]
	\arrow["\phi", shorten <=16pt, shorten >=16pt, Rightarrow, from=10, to=2]
	\arrow["\tau", shorten <=16pt, shorten >=16pt, Rightarrow, from=7, to=11]
\end{tikzcd}\]
there is a corresponding horizontal composite 
\[\begin{tikzcd}
	&& hb_0ga_1f &&& md_0lc_1k \\
	p & hb_0ga_0f && hb_1ga_1f & md_0lc_0k && md_1lc_1k & q \\
	&& hb_1ga_0f &&& md_1lc_0k
	\arrow["\phi", from=2-1, to=2-2]
	\arrow["{hb_0g\alpha f}", sloped, from=2-2, to=1-3]
	\arrow["{h\beta g a_1 f}", sloped, from=1-3, to=2-4]
	\arrow["{h\beta ga_0 f}"', sloped, from=2-2, to=3-3]
	\arrow["{h b_1g\alpha f}"', sloped, from=3-3, to=2-4]
	\arrow["\rho", from=2-4, to=2-5]
	\arrow["{md_0l\gamma k}", sloped, from=2-5, to=1-6]
	\arrow["{m\delta l c_0 k}"', sloped, from=2-5, to=3-6]
	\arrow["{m\delta l c_1 k}", sloped, from=1-6, to=2-7]
	\arrow["{md_1l\gamma k}"', sloped, from=3-6, to=2-7]
	\arrow["\tau", from=2-7, to=2-8]
	\arrow["hc_{S,B,T}(\gamma_{\beta g, \alpha})f"description, shorten <=16pt, shorten >=16pt, Rightarrow, from=1-3, to=3-3]
	\arrow["mc_{S,F,T}(\gamma_{\delta l, \gamma})k"description, shorten <=16pt, shorten >=16pt, Rightarrow, from=1-6, to=3-6]
\end{tikzcd}\]
of $2$-cells in the hom-$2$-category $\cK(S,T)$, in which the middle four interchange law holds strictly. Relation (c) is preserved because to a pasting scheme labelled as follows 
\[\begin{tikzcd}
	S & A & B & C & D & E & F & T
	\arrow["f", from=1-1, to=1-2]
	\arrow[""{name=0, anchor=center, inner sep=0}, "a_0", bend left=45, from=1-2, to=1-3]
	\arrow[""{name=1, anchor=center, inner sep=0}, "a_1"', bend right=45, from=1-2, to=1-3]
	\arrow["g", from=1-3, to=1-4]
	\arrow[""{name=2, anchor=center, inner sep=0}, "b_0", bend left=45, from=1-4, to=1-5]
	\arrow[""{name=3, anchor=center, inner sep=0}, "b_1"', bend right=45, from=1-4, to=1-5]
	\arrow["h", from=1-5, to=1-6]
	\arrow[""{name=4, anchor=center, inner sep=0},"c_0", bend left=45, from=1-6, to=1-7]
	\arrow[""{name=5, anchor=center, inner sep=0}, "c_1"', bend right=45, from=1-6, to=1-7]
	\arrow["k", from=1-7, to=1-8]
	\arrow[""{name=6, anchor=center, inner sep=0}, "p", bend left=30, from=1-1, to=1-8]
	\arrow[""{name=7, anchor=center, inner sep=0}, "q"', bend right=30, from=1-1, to=1-8]
	\arrow["\rho"', shorten <=23pt, shorten >=28pt, yshift=5mm, Rightarrow, from=6, to=3]
	\arrow["\tau"', shorten <=28pt, shorten >=25pt, yshift=-5mm, Rightarrow, from=2, to=7]
	\arrow["\alpha", shorten <=6pt, shorten >=6pt, Rightarrow, from=0, to=1]
	\arrow["\beta", shorten <=6pt, shorten >=6pt, Rightarrow, from=2, to=3]
	\arrow["\gamma", shorten <=6pt, shorten >=6pt, Rightarrow, from=4, to=5]
\end{tikzcd}\]
corresponds a whiskered version of the cube identity 
\[\begin{tikzcd}[row sep=.5in]
	& c_0b_0a_0 &&& c_0b_0a_0 \\
	c_1b_0a_0 & c_0b_1a_0 & c_0b_0a_1 & c_1b_0a_0 && c_0b_0a_1 \\
	c_1b_1a_0 && c_0b_1a_1 & c_1b_1a_0 & c_1b_0a_1 & c_0b_1a_1 \\
	& c_1b_1a_1 &&& c_1b_1a_1
	\arrow["{\gamma b_0a_0}"', from=1-2, to=2-1]
	\arrow["{c_1\beta a_0}"', from=2-1, to=3-1]
	\arrow["{c_1b_1\alpha}"', from=3-1, to=4-2]
	\arrow["{c_0\beta a_0}"{description}, from=1-2, to=2-2]
	\arrow["{\gamma b_1 a_0}"', sloped, from=2-2, to=3-1]
	\arrow["{c_0b_0\alpha}", from=1-2, to=2-3]
	\arrow[""{name=0, anchor=center, inner sep=0}, "{c_0\beta a_1}"{description}, from=2-3, to=3-3]
	\arrow["{c_0b_1\alpha}"', sloped, from=2-2, to=3-3]
	\arrow["{\gamma b_1 a_1}", from=3-3, to=4-2]
	\arrow["{\gamma b_0a_0}"', from=1-5, to=2-4]
	\arrow[""{name=1, anchor=center, inner sep=0}, "{c_1\beta a_0}"{description}, from=2-4, to=3-4]
	\arrow["{c_1b_1\alpha}"', from=3-4, to=4-5]
	\arrow["{c_1b_0\alpha}", sloped, from=2-4, to=3-5]
	\arrow["{c_1\beta a_1}"{description}, from=3-5, to=4-5]
	\arrow["{c_0b_0\alpha}", from=1-5, to=2-6]
	\arrow["{\gamma b_0a_1}", sloped, from=2-6, to=3-5]
	\arrow["{c_0\beta a_1}", from=2-6, to=3-6]
	\arrow["{\gamma b_1 a_1}", from=3-6, to=4-5]
	\arrow["{\gamma_{\gamma,\beta}}"', shorten <=8pt, shorten >=8pt, Rightarrow, from=2-1, to=2-2]
	\arrow["{\gamma_{\beta,\alpha}}"', shorten <=8pt, shorten >=8pt, Rightarrow, from=2-2, to=2-3]
	\arrow["{\gamma_{\gamma, \alpha}}"', shorten <=40pt, shorten >=40pt, Rightarrow, from=3-1, to=3-3]
	\arrow["{\gamma_{\gamma,\alpha}}", shorten <=40pt, shorten >=40pt, Rightarrow, from=2-4, to=2-6]
	\arrow["{\gamma_{\beta,\alpha}}"', shorten <=8pt, shorten >=8pt, Rightarrow, from=3-4, to=3-5]
	\arrow["{\gamma_{\gamma,\beta}}"', shorten <=8pt, shorten >=8pt, Rightarrow, from=3-5, to=3-6]
	\arrow["{=}"{description}, Rightarrow, draw=none, from=0, to=1]
\end{tikzcd}\]
for the Gray tensor product, which is a consequence of naturality applied to Gray cells\footnote{In the cube identity we omitted the intermediate $1$-cells and denoted again with $\gamma_{\bullet,\bullet}$ the image of a Gray cell under the appropriate composition $2$-functor to not overload the notation.}. 
\end{proof}
We move now to the proof of the main theorem.
\begin{thrm}\label{pasting theorem}
Every $2$-dimensional pasting diagram in a $\Gray$-category has a unique composition up to a contractible groupoid of choices.
\end{thrm}
\begin{proof}
Given a $2$-dimensional pasting diagram in a $\Gray$-category $\cK$, image of the pasting scheme $\cG$ via a labelling, we can encode its composites in the groupoid $\C_{\cG}$ of Definition \ref{C_G} thanks to Lemma \ref{compatibility}. From Lemma \ref{grpd refl} we know that the latter is the groupoid reflection of the category $\C_{\cG}'$ introduced in Definition \ref{C_G'}, that has a terminal object by Proposition \ref{C_G' term obj}. Therefore by Lemma \ref{2-functoriality of grpd refl} the groupoid $\C_{\cG}$ is equivalent to the terminal category $\bb1$, namely it is contractible. This concludes the proof. 
\end{proof}
\begin{rmk}
This proof can be also interpreted inside a lax $\Gray$-category. In this case the composition of a pasting diagram is no longer unique up to a contractible groupoid of choices but there is still a ``minimal choice'' for it, namely the one corresponding to the image of the terminal object of the category $\C_{\cG}'$ via a labelling. In fact, in defining $\C_{\cG}'$ we chose an orientation for the rewrites and this can be seen in turn as choosing a direction to the Gray cell expressing the middle four interchange isomorphism for our $\Gray$-category. 
\end{rmk}

\printbibliography

@phdthesis{johnson1987pasting,
  title={Pasting diagrams in $n$-categories with applications to coherence theorems and categories of paths},
  author={Johnson, Michael},
  year={1987},
  school={University of Sydney}
}

@book{baader1999term,
  title={Term rewriting and all that},
  author={Baader, Franz and Nipkow, Tobias},
  year={1999},
  publisher={Cambridge university press}
}

@article{huet1980confluent,
  title={Confluent reductions: Abstract properties and applications to term rewriting systems},
  author={Huet, G{\'e}rard},
  journal={Journal of the ACM (JACM)},
  volume={27},
  number={4},
  pages={797--821},
  year={1980},
  publisher={ACM New York, NY, USA}
}

@book{barendsen2003term,
  title={Term rewriting systems},
  author={Terese},
  year={2003},
  publisher={Cambridge Tracts in Theoretical Computer Science 55. Cambridge University Press}
}

@inproceedings{forest2018coherence,
  title={Coherence of Gray categories via rewriting},
  author={Forest, Simon and Mimram, Samuel},
  booktitle={3rd International Conference on Formal Structures for Computation and Deduction (FSCD 2018)},
  volume={108},
  year={2018},
  organization={Schloss Dagstuhl--Leibniz-Zentrum fuer Informatik}
}

@misc{NDVmres,
  author       = {Di Vittorio, Nicola}, 
  title        = {2-derivators},
  note         = {Master of Research Thesis, Macquarie University, },
  howpublished = {\url{https://doi.org/10.25949/19817653.v1}},  
  year = {2020}
  }

@book{gordon1995coherence,
  title={Coherence for tricategories},
  author={Gordon, Robert and Power, Anthony John and Street, Ross},
  volume={558},
  year={1995},
  publisher={American Mathematical Soc.}
}

@article{power19902,
  title={A 2-categorical pasting theorem},
  author={Power, A. John},
  journal={Journal of Algebra},
  volume={129},
  number={2},
  pages={439--445},
  year={1990},
  publisher={Elsevier}
}

@article{verity2011enriched,
  title={Enriched categories, internal categories and change of base},
  author={Verity, Dominic},
  journal={Reprints in {T}heory and {A}pplications of {C}ategories},
  number={20},
  pages={1--266},
  year={2011},
  publisher={Mount Allison University, Department of Mathematics and Science}
}

@book{kelly1982basic,
  title={Basic concepts of enriched category theory},
  author={Kelly, G. M.},
  volume={10},
  year={2005},
  publisher={Reprints in Theory and Applications of Categories}
}

@book{johnson20212,
  title={2-dimensional categories},
  author={Johnson, Niles and Yau, Donald},
  year={2021},
  publisher={Oxford University Press, USA}
}

@phdthesis{forest2021computational,
  title={Computational descriptions of higher categories},
  author={Forest, Simon},
  year={2021},
  school={Institut Polytechnique de Paris}
}

@mish{hackney2021infty,
  title={An $(\infty, 2)$-categorical pasting theorem},
  author={Hackney, Philip and Ozornova, Viktoriya and Riehl, Emily and Rovelli, Martina},
  howpublished={arXiv:2106.03660,},
  year={2021}
}

@book{RVbook,
  title={Elements of $\infty$-Category Theory},
  author={Riehl, Emily and Verity, Dominic},
  volume={194},
  year={2022},
  publisher={Cambridge University Press}
}

\end{document}